\documentclass[11pt]{amsart}

\usepackage{amsmath,amssymb,amsthm}
\usepackage[T1]{fontenc}
\usepackage{hyperref}
\usepackage{tikz}
\usepackage{tikz-cd}
\usetikzlibrary{arrows.meta}
\hypersetup{hidelinks}
\usepackage[colorinlistoftodos]{todonotes}

\theoremstyle{plain}
\newtheorem{prop}{Proposition}
\newtheorem{theorem}{Theorem}

\newtheorem*{theorem*}{Theorem}
\newtheorem*{cor*}{Corollary}

\theoremstyle{definition}
\newtheorem{definition}{Definition}

\theoremstyle{remark}
\newtheorem{rem}{Remark}

\newcommand{\var}{\operatorname{var}}
\newcommand{\can}{\operatorname{can}}
\newcommand{\Var}{\operatorname{Var}}

\newcommand{\Q}{\mathbb Q}
\newcommand{\C}{\mathbb C}
\newcommand{\PP}{\mathbb P^1}
\newcommand{\Gm}{\mathbb G_m}
\newcommand{\DR}{\operatorname{DR}}
\newcommand{\Ass}{A}

\title[The Drinfeld associator and $A_\infty$ functors]
{The Drinfeld associator and $A_\infty$ functors}
\author{Nikita Markarian}
\date{}

\begin{document}
\begin{abstract}
We compare two mixed Hodge structures attached to a unipotent variation $\mathbb V$ of mixed Hodge--Tate structures on
$X=\mathbb P^1\setminus{0,1,\infty}$: the cohomology of its extension by $*$ at $0$ and by $!$ at $\infty$, and the vanishing cycles at $1$. The semi-holonomy isomorphism constructed in \cite{MarkarianConvolution} identifies these two structures on their underlying rational structures, compatibly with their weight filtrations, but need not respect their Hodge filtrations. We compute the resulting Hodge defect. Its universal noncommutative series is given by a specific component of the inverse Drinfeld associator multiplied by an explicit local monodromy factor. We conclude with a formal $A_\infty$ interpretation of the coefficients of the Hodge defect in terms of weight-framed Tate extensions.
\end{abstract}

\email{nikita.markarian@gmail.com}
\address{UMR 7501, Université de Strasbourg,
7 rue René Descartes,
67084 Strasbourg Cedex, France}

\maketitle

\section*{Introduction}

We study the Drinfeld associator through the action of cohomological
functors on iterated extensions of Tate objects.  The guiding
question is how a functor acts on successive extensions once its
values on the Tate objects are known.  We compare cohomology and
vanishing cycles, calculate their relative periods, and identify
the resulting series in terms of the associator.

Let $\mathbb V$ be an admissible unipotent variation of mixed
Hodge--Tate structures on
$X=\PP\setminus\{0,1,\infty\}$, with underlying rational local
system $V$.  The semi-holonomy construction of
\cite{MarkarianConvolution} gives an isomorphism
\[
\varphi_I:\Phi(j_{**!}V)\xrightarrow{\sim}H^1(j_{**!}V)
\]
along the oriented interval $I=[0,1]$, where $\Phi$ denotes
vanishing cycles at $1$.  Both spaces carry mixed Hodge--Tate
structures.  Semi-holonomy respects their weight filtrations but
need not respect their Hodge filtrations.

The Hodge filtration of a mixed Hodge--Tate structure canonically
splits its weight filtration over $\C$.  These splittings, together
with semi-holonomy and its associated graded map, form a comparison
square.  Its failure to commute defines a unipotent automorphism,
the Hodge defect of shrinking along $I$.  Our main calculation expresses
the entries of its matrix by
convergent iterated integrals.  For a word in the two Kummer
classes, it gives an explicit finite matrix.

The resulting universal series is a component of the inverse
Drinfeld associator, multiplied by an explicit local monodromy
factor.  Its coefficients belong to the algebra generated by
multiple zeta values and $2\pi i$.  Comparing the semi-holonomies
at the two ends of the interval recovers the full holonomy.
This presents the associator through the comparison of the two
cohomological functors.

This comparison is related to the approach of Deligne and Terasoma
\cite{DTerICM,DeligneTerasoma}, in which multiplicative convolution and
vanishing cycles lead to the transport algebra and its harmonic
coproduct; see also \cite{EnriquezFurusho}.
The semi-holonomy construction of \cite{MarkarianConvolution}
provides the topological framework for this relation.  The companion
paper \cite{MarkarianDefect} develops its Hodge-theoretic
interpretation: it extends the defect calculated here to more general
mixed Hodge modules and applies it to the regularized double shuffle
relations through multiplicative convolution.

To retain periods as numbers, we equip the mixed Hodge--Tate
structures with rational splittings of their Betti weight
filtrations.  The final section sketches an $A_\infty$
interpretation of the coefficients of the Hodge defect using these
weight-framed extensions and Beilinson--Deligne cohomology
\cite{BeilinsonAbsolute,CarlsonHain}.  In this description the
coefficients record the action on successive extension classes.

Related $A_\infty$ and bar constructions occur in the work of
Arias Abad--Sch\"atz, Terasoma, and Goncharov
\cite{AriasAbadSchaetz,TerasomaBar,GoncharovHidden}.  The comparison
considered here comes from shrinking the $j_{**!}$-extension:
it gives a Hodge-theoretic interpretation of the semi-holonomy
in \cite{MarkarianConvolution}.  The relation with Drinfeld's
construction \cite{Drinfeld} is established after the period
calculation.

The same point of view suggests a motivic refinement, with
compatible realizations and framings in a category of mixed Tate
motives.  Such a refinement is beyond the scope of this paper.

Section~\ref{sec:iterated-kummer} constructs the Hodge realizations
and the comparison square.  Section~\ref{sec:comparison-defect}
computes the Hodge defect and its word coefficients.
Section~\ref{sec:holonomy-associator} discusses holonomy and the
associator.  Section~\ref{sec:ainfty-category} gives the formal
$A_\infty$ interpretation.

\medskip
\noindent\textit{Acknowledgements.}
I would like to thank IHES for hospitality and excellent
        working conditions. 
This project was supported by the PAUSE program and the ITI IRMIA++.

\section{Iterated Kummer extensions}
\label{sec:iterated-kummer}

\subsection{Kummer variations}
\label{subsec:kummer}

Let
\[
X=\PP\setminus\{0,1,\infty\}.
\]
We use the convention in which $\Q(n)$ has weight $-2n$.
Recall that a mixed Hodge structure is Hodge--Tate if its
weight-graded pieces are direct sums of Tate structures.
For a variation, this condition is imposed on every fiber.

The two basic objects are the Kummer variations $K_z$ and $K_{1-z}$:
\begin{equation}
\begin{tikzcd}[cramped, sep=small]
0 \arrow[r]
& \Q_X(1) \arrow[r]
& K_z \arrow[r]
& \Q_X(0) \arrow[r]
& 0
\end{tikzcd}
\label{eq:kummer-z}
\end{equation}
and
\begin{equation}
\begin{tikzcd}[cramped, sep=small]
0 \arrow[r]
& \Q_X(1) \arrow[r]
& K_{1-z} \arrow[r]
& \Q_X(0) \arrow[r]
& 0 .
\end{tikzcd}
\label{eq:kummer-one-minus-z}
\end{equation}
We use the standard Kummer class for $z$ and the negative of the
standard class for $1-z$.  Thus $K_{1-z}$ below has the opposite
Tate framing to the usual Kummer extension of $1-z$.  With this
convention their de Rham extension classes are represented by
\[
\omega_0=\frac{dz}{z},
\qquad
\omega_1=\frac{dz}{1-z}.
\]

\begin{definition}
We write $\mathcal T(X)$
for the category of admissible unipotent
variations of mixed Hodge--Tate structures with constant Tate graded
pieces.  These are the iterated extensions of constant Tate
variations.  A rational local system and its comparison with the
filtered flat bundle are part of the data of an object.
\end{definition}

\subsection{Word connections}
\label{subsec:word-extensions}
We now introduce a family of finite logarithmic connections indexed
by words in the two symbols $0$ and $1$.  These connections will be
used only to record the coefficients of the universal matrices that
appear later.  Let
\[
w=i_1\cdots i_n,\qquad i_r\in\{0,1\},
\]
let $E_w$ be the graded complex vector space with basis
$q_0,\ldots,q_n$, where $q_r$ has Tate degree $r$.  Put
\begin{equation}
W_{-2r}E_w=\langle q_r,\ldots,q_n\rangle,
\qquad
F^pE_w=\langle q_r\mid r\leq-p\rangle.
\label{eq:word-weight-filtration}
\end{equation}
Define $N_0^{(w)},N_1^{(w)}$ by
\begin{equation}
N_a^{(w)}q_r=
\begin{cases}
q_{r+1},&a=i_{n-r},\\
0,&a\ne i_{n-r},
\end{cases}
\quad 0\le r<n,
\qquad N_a^{(w)}q_n=0.
\label{eq:word-residues}
\end{equation}
The word connection $\mathcal E_w$ is the filtered logarithmic
connection on $E_w\otimes\mathcal O_X$ given by
\begin{equation}
\nabla_w=d-N_0^{(w)}\omega_0-N_1^{(w)}\omega_1.
\label{eq:word-connection}
\end{equation}
Its canonical extension to $\PP$ is the trivial bundle.  The empty
word gives the trivial rank-one connection.  For $n>0$ there is an
exact sequence of filtered connections
\begin{equation}
0\longrightarrow\C(n)\otimes\mathcal O_X
\longrightarrow\mathcal E_{i_1\cdots i_n}
\longrightarrow\mathcal E_{i_2\cdots i_n}
\longrightarrow0.
\label{eq:word-extension-sequence}
\end{equation}
Here $\C(n)$ denotes the one-dimensional graded space of Tate degree
$n$; it does not specify a rational Betti realization.  In the
splitting given by the $q_r$, the off-diagonal entry of the
connection is $-\omega_{i_1}$ times the map $q_{n-1}\mapsto q_n$.

Matrices act on columns, so the last letter of $w$ acts first.
Consequently the $(n,0)$-entry of a noncommutative series evaluated
at $N_0^{(w)},N_1^{(w)}$ is its coefficient of $w$.
More generally, the $(b,a)$-entry extracts the coefficient of
$i_{n-b+1}\cdots i_{n-a}$.

When the same formulas
are applied to an actual object of $\mathcal T(X)$, the matrices
$N_0,N_1$ are the connection matrices of its logarithmic de Rham
realization, as in Subsection~\ref{subsec:nearby-cycle-calculation}.

\subsection{Shrinking}
\label{subsec:shrinking}

We use the factorization
\begin{equation}
\begin{tikzcd}[cramped, sep=small]
X \arrow[r, "j_1"]
& \Gm \arrow[r, "j_0"]
& \mathbb A^1 \arrow[r, "j_\infty"]
& \PP ,
\end{tikzcd}
\label{eq:embeddings}
\end{equation}
where $j_a$ adds the point $a$.  We omit the derived-functor notation
and put
\begin{equation}
j_{**!}=j_{\infty!}j_{0*}j_{1*}.
\label{eq:star-star-shriek}
\end{equation}
We use unipotent cycles with the perverse shift:
$\Phi_f=\phi_{f,1}[-1]$ and $\Psi_f=\psi_{f,1}[-1]$.
The shift $L[1]$ of a local system $L$ on $X$ is perverse.
Write $i_1:\{1\}\hookrightarrow\Gm$ and
$\mathcal F=j_{1*}L[1]$.  Since this is a derived $*$-extension,
$i_1^!\mathcal F=0$.  The variation triangle
\[
i_1^!\mathcal F\longrightarrow\Phi_{1-z}(\mathcal F)
\xrightarrow{\,\var\,}\Psi_{1-z}(\mathcal F)
\longrightarrow i_1^!\mathcal F[1]
\]
therefore makes $\var$ an isomorphism.  This does not
require $T_1-1$ to be invertible: in
$\var\circ\can=T_1-1$, the map $\can$ need not be an
isomorphism.  We take the nearby-cycle lift determined by the
positive real ray of $1-z$.

\begin{prop}[Shrinking]
\label{prop:shrinking}
For every unipotent local system $L$ on $X$ one has
\begin{equation}
H^i\!\left(\PP,j_{**!}L[1]\right)=0
\qquad (i\ne0),
\label{eq:cohomological-purity}
\end{equation}
and the oriented interval $I=[0,1]$ induces a natural isomorphism
\begin{equation}
\begin{tikzcd}[cramped, sep=small]
\Phi_{1-z}\!\left(j_{1*}L[1]\right)
  \arrow[r, "\varphi_I", "\sim"']
& H^0\!\left(\PP,j_{**!}L[1]\right).
\end{tikzcd}
\label{eq:shrinking-isomorphism}
\end{equation}
\end{prop}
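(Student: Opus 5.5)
We reduce to a topological computation with the pair $(\PP\setminus\{\infty\},\{0\})$ and contract the interval $I$. Since $j_{\infty!}$ is extension by zero, $H^\ast(\PP,j_{**!}L[1])=H^\ast_c(\mathbb A^1,j_{0*}j_{1*}L[1])$. Let $\mathcal G=j_{0*}\mathcal F$ on $\mathbb A^1$, with $\mathcal F=j_{1*}L[1]$ as above. Choose a small closed disc $D$ around $1$ and let $U=\mathbb A^1\setminus(I\cup D)$. Two facts will be used: $U$ retracts onto a neighbourhood of $\infty$, and $\mathcal G|_U$ is the shifted local system $L[1]$.

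\textbf{Step 1: vanishing of compactly supported cohomology near $\infty$.} The complement $U$ is homeomorphic to a punctured disc around $\infty$ with a slit, hence to an open half-annulus. We claim
\[
H^\ast_c(U,L[1])=0 .
\]
Topologically, $U\cong(0,1)\times\mathbb R_{>0}$ near $\infty$, the point $\infty$ itself being removed. Compactly supported cohomology of $\mathbb R\times\mathbb R_{>0}$ with constant coefficients vanishes, because $\mathbb R_{>0}=(0,\infty)$ has $H^\ast_c=H^\ast(\,[0,\infty],\{0,\infty\})$-type contributions that cancel. More precisely, $U$ retracts onto the open sector, and $H^\ast_c$ of $[0,1)\times(0,1)$ is zero. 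The local system is trivial on $U$, since $U$ is simply connected. To make this precise I would write $U$ as $\{1<|z|\}\setminus\mathbb R_{\ge 1}$ after enlarging the disc, and use $H_c(\mathbb R\times[1,\infty))=0$ via the closed half-line.

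\textbf{Step 2: the localization triangle.} We take
\[
j_{U!}j_U^\ast\mathcal G\to\mathcal G\to i_{K\ast}i_K^\ast\mathcal G,
\qquad K=I\cup D,
\]
so that $H^\ast_c(\mathbb A^1,\mathcal G)=H^\ast(K,\mathcal G|_K)$. Now $K$ contracts onto $D$. The point $0$ lies at the end of $I$, and $\mathcal G=j_{0*}\mathcal F$. Restricting $j_{0*}$ to a half-open segment $[0,\epsilon)$ lying inside $I$ gives the value of $j_{0*}$ at a boundary point of a closed half-line. Its stalk is $R\Gamma$ of a small punctured neighbourhood, but on the segment the sections extend from $(0,\epsilon)$. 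The key lemma I would prove is therefore that the restriction $H^\ast(K,\mathcal G)\to H^\ast(D,\mathcal G)$ is an isomorphism, equivalently $H^\ast(K,D;\mathcal G)=0$. This reduces to
\[
H^\ast\bigl([0,1),\{\text{pt}\};\,(j_{0*}L)|_{[0,1)}\bigr)=0 ,
\]
which I would check via the triangle $i_0^\ast j_{0*}$. I expect this to be the main obstacle: one must show that the restriction of $j_{0*}$ to the segment is $*$-extended along the half-line in the real sense. I would handle it by base change for the closed embedding of the ray into the plane, using that $Rj_{0*}$ commutes with restriction to a transversal real ray for unipotent (hence constructible, conic near $0$) sheaves.

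\textbf{Step 3: identification with vanishing cycles.} With $K$ replaced by $D$, we get $H^\ast(D,\mathcal F)$ relative to the part near the boundary arc. Keeping track of the boundary point $D\cap I$, it is more accurate to state the result as $H^\ast(D,\,D\cap\{\text{ray}\};\mathcal F)$. By the definition of $\phi_{1-z}$ via the positive real ray of $1-z$, this is $\phi_{1-z,1}(\mathcal F)[-1]$, i.e.\ $\Phi_{1-z}(\mathcal F)$, placed in degree $0$ after the perverse shift. This gives $\varphi_I$ and also shows that $H^i=0$ for $i\ne0$, because $\Phi_{1-z}(\mathcal F)$ is concentrated in one degree ($\mathcal F$ perverse, $\Phi$ t-exact). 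Naturality in $L$ holds because every step is functorial.
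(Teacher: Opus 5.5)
There is a genuine gap, and it starts in Step~1.

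\textbf{Step 1 is false.} The set $U=\mathbb A^1\setminus(I\cup D)$ is the complement in $\C$ of a compact connected set, so it is an annulus around $\infty$. There is no slit running out to $\infty$, and $U$ is not simply connected: the loop encircling $I\cup D$ acts on $L$ by the monodromy $T_\infty$ at $\infty$. Writing $U\cong S^1\times(0,1)$ gives $H^k_c(U,L)\simeq H^{k-1}(S^1,L)$. This is $\ker(T_\infty-1)$ in degree $1$ and $\operatorname{coker}(T_\infty-1)$ in degree $2$, nonzero whenever $L\neq0$.

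\textbf{Step 2 fails as a consequence.} The identification $H^\ast_c(\mathbb A^1,\mathcal G)\simeq H^\ast(K,\mathcal G|_K)$ is false. For $L=\Q$ the sheaf $\mathcal H^{-1}(\mathcal G)$ is constant, so $H^{-1}(K,\mathcal G|_K)=H^0(K,\Q)=\Q$, whereas the left side is concentrated in degree $0$. Your key lemma in Step~2 also fails. The stalk of $Rj_{0*}L$ at $0$ is the cohomology of a punctured disc, computed by $[L\xrightarrow{T_0-1}L]$, not the germ of sections along the ray. So restriction to the segment does not commute with $Rj_{0*}$, and relative to a point of $(0,1)$ the group $H^1$ has dimension $\operatorname{rk}L$. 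Conceptually, the triangle $j_{U!}j_U^\ast\to\mathrm{id}\to i_{K\ast}i_K^\ast$ imposes a compact-support condition along the inner edge of the annulus as well as at $\infty$. That is the wrong boundary condition.

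\textbf{The missing idea is cohomology with supports in $I$}, i.e.\ $i_I^!$ instead of $i_K^\ast$.
\begin{enumerate}
\item The complement $V=\PP\setminus I$ is an open disc containing $\infty$. On $V$, $j_{**!}L[1]$ is $j_{\infty!}$ of the local system on $V\setminus\{\infty\}=\C\setminus I$.
\item Since $\C\setminus I$ retracts onto a small punctured neighbourhood of $\infty$, the triangle $j_{\infty!}\to Rj_{\infty*}\to i_{\infty*}i_\infty^\ast Rj_{\infty*}$ gives $R\Gamma(V,j_{**!}L[1])=0$. Hence $R\Gamma(\PP,j_{**!}L[1])\simeq R\Gamma_I(\PP,j_{**!}L[1])$.
\item Since $*$-extensions have zero costalks, $i_0^!Rj_{0*}=0$, so $0$ can be removed from the support. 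This gives $R\Gamma_{(0,1]}(\Gm,\mathcal F)$, which is the supported-cohomology shrinking \cite[Proposition~2]{MarkarianConvolution} that the paper invokes.
\item On the interior of $I$ the $!$-restriction of $L[1]$ is $L$ in degree $0$. So supported classes are horizontal sections, and the whole group is the local supported cohomology at $1$.
\item That local group is $R\Gamma(B,\{p\};\mathcal F)$ for a small disc $B$ around $1$ and a point $p$ off the slit. This is essentially your Step~3 computation: it is $\Phi_{1-z}(\mathcal F)$ in degree $0$. It yields both the vanishing for $i\neq0$ and the isomorphism $\varphi_I$.
\end{enumerate}
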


\begin{proof}
Set $\mathcal F=j_{1*}L[1]$.  The supported-cohomology shrinking
of \cite[Proposition~2]{MarkarianConvolution} gives
\begin{equation}
H^\bullet_{(0,1]}(\Gm,\mathcal F)
  \xrightarrow{\ \sim\ }
H^\bullet\!\left(\PP,j_{\infty!}j_{0*}\mathcal F\right).
\label{eq:shrinking-supported-cohomology}
\end{equation}
The left-hand side is concentrated in degree zero.  Its
degree-zero group is $\Phi_{1-z}(\mathcal F)$.  Composing
\eqref{eq:shrinking-supported-cohomology} with this identification
gives \eqref{eq:shrinking-isomorphism}.  Equivalently, a class is
represented by a horizontal section over the interior of $I$, and
the variation morphism identifies its limiting value in the nearby-cycle
fiber at $1$.
\end{proof}

\begin{rem}
Vanishing \eqref{eq:cohomological-purity} is implied by
the exactness properties of push-forwards along affine morphisms of perverse sheaves, see \cite[Remark 1]{MarkarianConvolution}.
The same holds for permutations of the boundary types $(**!)$
and, by Verdier duality, for types $(!!*)$.  Relations between the
corresponding cohomology groups are discussed in
Subsection~\ref{subsec:duality-path-torsor}.

The space $H^0\!\left(\PP,j_{**!}L[1]\right)$ may be identified
with the horizontal sections of $L$ over $(0,1)$.  Under this
identification, $\var\varphi_I^{-1}$ takes a section
to its nearby-cycle value at $1$; see
\cite[Section~1]{MarkarianConvolution}.
\end{rem}

\subsection{Logarithmic realization}
\label{subsec:logarithmic-realization}

Let $\mathbb V\in\mathcal T(X)$, with filtered flat bundle
$(\mathcal V,\nabla,F,W)$, and put $D=\{0,1,\infty\}$.
Write $\overline{\mathcal V}$ for the Deligne canonical extension
to $\PP$.  Admissibility extends $F$ to subbundles
$F^p\overline{\mathcal V}$.  We shall use the complex
\begin{equation}
\DR_{**!}(\mathbb V)=
\left[
\overline{\mathcal V}(-\infty)
\xrightarrow{\ \nabla\ }
\overline{\mathcal V}\otimes
\Omega^1_{\PP}(\log D)(-\infty)
\right][1]
\label{eq:logarithmic-complex}
\end{equation}
with filtration
\begin{equation}
F^p\DR_{**!}(\mathbb V)=
\left[
F^p\overline{\mathcal V}(-\infty)
\xrightarrow{\ \nabla\ }
F^{p-1}\overline{\mathcal V}\otimes
\Omega^1_{\PP}(\log D)(-\infty)
\right][1].
\label{eq:hodge-filtered-logarithmic-complex}
\end{equation}
Here $\Omega^1_{\PP}(\log D)$ consists of one-forms with at most
logarithmic poles along $D$, and $(-\infty)$ denotes tensor product
with $\mathcal O_{\PP}(-\{\infty\})$.  The displayed terms lie in
degrees $0,1$ before the perverse shift $[1]$.  Griffiths
transversality gives the index $p-1$ in the one-form term.
Put
\begin{equation}
H^1_{\mathrm{dR}}(j_{**!}\mathbb V)
=\mathbb H^0\!\left(\PP,\DR_{**!}(\mathbb V)\right).
\label{eq:de-rham-cohomology}
\end{equation}

\begin{prop}[Logarithmic Hodge realization]
\label{prop:logarithmic-hodge-realization}
The space $H_B^1(j_{**!}V_{\Q})$ carries a functorial mixed Hodge
structure, denoted by $H^1(j_{**!}\mathbb V)$.  Its de Rham
realization and Hodge filtration are computed by
\eqref{eq:logarithmic-complex} and
\eqref{eq:hodge-filtered-logarithmic-complex}.  There is a functorial
comparison quasi-isomorphism
\begin{equation}
j_{**!}V_{\Q}[1]\otimes_{\Q}\C
\xrightarrow{\ \sim\ }
\DR_{**!}(\mathbb V)^{\mathrm{an}}.
\label{eq:logarithmic-comparison}
\end{equation}
\end{prop}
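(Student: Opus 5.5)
The plan is to obtain the mixed Hodge structure from Saito's theory of mixed Hodge modules. Then we identify its de Rham realization with the logarithmic complex \eqref{eq:logarithmic-complex}, and prove the comparison \eqref{eq:logarithmic-comparison} by a local computation at each point of $D$.

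\textbf{Step 1: the Hodge structure.} An admissible variation $\mathbb V\in\mathcal T(X)$ defines a mixed Hodge module $\mathbb V[1]$ on $X$. The functors $j_{1*}$, $j_{0*}$, $j_{\infty!}$ lift to the derived categories of mixed Hodge modules. Pushing forward to a point gives a complex of mixed Hodge structures whose underlying rational complex is $R\Gamma(\PP,j_{**!}V_\Q[1])$. By \eqref{eq:cohomological-purity} of Proposition~\ref{prop:shrinking}, this complex is concentrated in degree $0$. We therefore get a mixed Hodge structure on $H^0(\PP,j_{**!}V_\Q[1])=H^1_B(j_{**!}V_\Q)$, and functoriality is inherited from the functoriality of the six operations.

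\textbf{Step 2: local comparison.} We check that \eqref{eq:logarithmic-complex} computes $j_{**!}V\otimes\C$ analytically. This is a local statement on $\PP$.
\begin{itemize}
\item On $X$, the logarithmic de Rham complex is the ordinary holomorphic de Rham complex of $(\mathcal V,\nabla)$. It resolves $V_\C$ by the holomorphic Poincar\'e lemma.
\item At $0$ and $1$, the canonical extension has residue $-N_a$ nilpotent, with eigenvalues in $[0,1)$. Deligne's logarithmic comparison then shows that $\overline{\mathcal V}\otimes\Omega^\bullet(\log D)$ computes $Rj_*V_\C$. Concretely, in a local trivialization one reduces to the Koszul complex of $t\partial_t-N$ on power series. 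Only the constant term contributes, giving $\ker N$ and $\operatorname{coker} N$, as for $Rj_*$.
\item At $\infty$, twisting by $\mathcal O(-\infty)$ replaces the residue $R$ by $R+1$, which is invertible on all power series. The complex is then acyclic at the point, which is the stalk of $j_!$.
\end{itemize}
Combining these gives \eqref{eq:logarithmic-comparison}. Functoriality holds because canonical extension is functorial.

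\textbf{Step 3: the Hodge filtration.} This is the main obstacle: showing that the filtered complex \eqref{eq:hodge-filtered-logarithmic-complex} is filtered quasi-isomorphic to the de Rham complex of the mixed Hodge module $j_{\infty!}j_{0*}j_{1*}\mathbb V[1]$. I would use Saito's description of $j_*$ for admissible unipotent variations along a divisor. The $F$-filtered $\mathcal D$-module is generated by $F^p\overline{\mathcal V}$ (the $V^{\ge0}$ lattice, since monodromy is unipotent). Its de Rham complex is filtered quasi-isomorphic to the logarithmic complex with the Griffiths-shifted filtration $F^p\to F^{p-1}\otimes\Omega^1(\log)$; this is the standard filtered logarithmic comparison, cf.\ Steenbrink--Zucker and Saito. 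For $j_!$ I would apply duality. The dual of $j_*$ of the dual variation is described by the lattice $V^{>0}=\overline{\mathcal V}(-\infty)$ with the induced filtration. Strictness of the Hodge filtration on the hypercohomology, which comes from Saito's theory, then identifies $F^p\mathbb H^0$ with the image of $\mathbb H^0(F^p\DR_{**!})$. The delicate points are the following.
\begin{itemize}
\item The filtration on the twisted lattice at $\infty$ must be matched. I would reduce to the case $\mathbb V$ is Tate by d\'evissage along $W$, using the five lemma and strictness.
\item One must then treat the rank-one case directly, where everything is explicit.
\end{itemize}
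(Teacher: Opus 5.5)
Your outline is correct, but it follows a different route from the paper's.

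\textbf{The paper's route.} The paper never uses mixed Hodge modules. It starts from the Steenbrink--Zucker cohomological mixed Hodge complex $\mathcal K$ for $Rj_*V_{\Q}$, whose de Rham component is the untwisted logarithmic complex. It builds a boundary mixed Hodge complex $\mathcal B$ at $\infty$ from the limiting structure and $N_\infty$. It then takes the homotopy fiber $\mathcal C$ of evaluation-and-residue $r\colon\mathcal K\to i_*\mathcal B$, which models $j_{\infty!}j_{0*}j_{1*}V_{\Q}$. The single key observation is that $r_{\mathrm{dR}}$ is surjective on every $F^p$. Hence its kernel, which is exactly the twisted complex \eqref{eq:hodge-filtered-logarithmic-complex}, is $F$-filtered quasi-isomorphic to $\mathcal C_{\mathrm{dR}}$. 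This yields the Hodge filtration and \eqref{eq:logarithmic-comparison} at once, with explicit weights, entirely inside Deligne's formalism and without duality.

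\textbf{What your route buys and what it costs.} Saito's formalism gives you the mixed Hodge structure, its concentration in degree $0$, and functoriality for free. Your Step~2 is a quicker proof of the comparison than the paper's. Only $t^m$ with $m\ge1$ occur at $\infty$, so each differential $m+R_\infty$ is invertible, the stalk is acyclic, and $\DR_{**!}\simeq j_{\infty!}j_\infty^*\DR_{**!}$. The price is that Step~3 rests on Saito's local $V$-filtration theory, whose normalizations are easy to get wrong.

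\textbf{Corrections to Step~3.} Your parenthetical description of $j_*$ is not right: $\overline{\mathcal V}(*D)$ is not generated over $\mathcal D$ by $F^p\overline{\mathcal V}$. For the trivial variation, $\mathcal D\cdot\mathcal O=\mathcal O$ is the minimal extension. Saito's filtration is instead $F_p\mathcal O(*D)=\mathcal O((p+1)D)$, generated by $t^{-1}\mathcal O$. The filtered quasi-isomorphism with the logarithmic complex of $\overline{\mathcal V}$ is still true, but it is a theorem to cite, not something read off from generators.

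Your d\'evissage to Tate objects is unnecessary. Matching the induced filtration on $\overline{\mathcal V}(-\infty)$ is just saturation: $F^p\overline{\mathcal V}\cap\overline{\mathcal V}(-\infty)=F^p\overline{\mathcal V}(-\infty)$, because the $F^p\overline{\mathcal V}$ are subbundles. This is the same fact the paper uses to identify $\ker r_{\mathrm{dR}}$ on $F^p$. Locally at $\infty$, Grothendieck duality with $\mathcal{H}om(\Omega^1_{\PP}(\log D),\Omega^1_{\PP})=\mathcal O(-D)$ carries $\operatorname{gr}^p_F$ of the logarithmic complex of $\overline{\mathcal V}^\vee$ to $\operatorname{gr}^{1-p}_F$ of the twisted complex of $\overline{\mathcal V}$. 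That computation is exactly what your duality step needs.

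What you do need, both for duality and for gluing the three local statements, is a global natural filtered morphism. The inclusion of $\DR_{**!}$ into the de Rham complex of Saito's module provides it. Near $\infty$ the degree-one term is $\overline{\mathcal V}\otimes\Omega^1_{\PP}(\log D)(-\infty)=\overline{\mathcal V}\otimes\Omega^1_{\PP}$, and $\overline{\mathcal V}$ embeds in the $!$-extension there.
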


\begin{proof}
We construct the mixed Hodge complex before applying the shift
$[1]$.  Let $j:X\hookrightarrow\PP$.  The ordinary logarithmic
complex
\[
\mathcal K_{\mathrm{dR}}=
\left[
\overline{\mathcal V}\xrightarrow{\nabla}
\overline{\mathcal V}\otimes\Omega^1_{\PP}(\log D)
\right]
\]
is the complex component of a cohomological mixed Hodge complex
$\mathcal K$ representing $Rj_*V_{\Q}$.
Its Hodge filtration has terms $F^p\overline{\mathcal V}$ and
$F^{p-1}\overline{\mathcal V}\otimes\Omega^1(\log D)$; see
\cite[Theorem~(4.1), (4.5)--(4.6), and (4.8)--(4.13)]{SteenbrinkZucker}.
The underlying logarithmic comparison is also given in
\cite[Chapter~II, Corollary~6.10]{DeligneED}.

Write $\mathcal W$ for the cohomological weight filtration on
these complexes.  The constant Tate graded pieces imply that
each local monodromy logarithm sends $W_k$ into $W_{k-2}$.
Hence its relative monodromy filtration is $W$.  In this case
the filtration of \cite[(4.8)--(4.9)]{SteenbrinkZucker} is
\[
\begin{split}
\mathcal W_k\mathcal K_{\mathrm{dR}}^0
 &=W_k\overline{\mathcal V},\\
\mathcal W_k\mathcal K_{\mathrm{dR}}^1
 &=\left\{\alpha\in W_k\overline{\mathcal V}
       \otimes\Omega^1_{\PP}(\log D):
       \operatorname{Res}_a\alpha\in W_{k-1}
       (\overline{\mathcal V}|_a)\text{ for all }a\in D\right\}.
\end{split}
\]

We now impose the boundary condition at $\infty$.  Let
$i:\{\infty\}\hookrightarrow\PP$ and use the coordinate $t=1/z$.
Put $E_\infty=\overline{\mathcal V}|_\infty$ and
$R_\infty=\operatorname{Res}_\infty\nabla$.  The limiting mixed
Hodge structure $\Psi_\infty$ has complex realization $E_\infty$,
with the extended Hodge filtration and weight filtration $W$;
see \cite[(3.13)(ii)--(iii)]{SteenbrinkZucker}.
Let $T_\infty$ be its monodromy and
$N_\infty:\Psi_\infty\to\Psi_\infty(-1)$ its logarithm, with the
Tate twist.  In de Rham coordinates,
\[
T_\infty=\exp(-2\pi iR_\infty),
\qquad (N_\infty)_{\mathrm{dR}}=-R_\infty.
\]
Thus the boundary mixed Hodge complex has realizations
\[
\mathcal B_{\Q}=
[\Psi_{\infty,\Q}\xrightarrow{-N_\infty}
 \Psi_{\infty,\Q}(-1)],
\qquad
\mathcal B_{\mathrm{dR}}=
[E_\infty\xrightarrow{R_\infty}E_\infty].
\]
The Tate twist shifts the Hodge filtration in degree one; the
cohomological weight filtration also accounts for that degree.
Explicitly,
\[
F^p\mathcal B_{\mathrm{dR}}
 =[F^pE_\infty\longrightarrow F^{p-1}E_\infty],
\qquad
\mathcal W_k\mathcal B_{\mathrm{dR}}
 =[W_kE_\infty\longrightarrow W_{k-1}E_\infty].
\]
Since $R_\infty W_k\subset W_{k-2}$, the differential on
$\operatorname{gr}^{\mathcal W}_k\mathcal B$ is zero.  Its terms
in degrees $0,1$ are pure of weights $k,k+1$, respectively.
Therefore $\mathcal B$ is a cohomological mixed Hodge complex.

Evaluation and residue define a morphism
\[
r_{\mathrm{dR}}:\mathcal K_{\mathrm{dR}}\longrightarrow
i_*\mathcal B_{\mathrm{dR}},
\qquad
r^0(s)=s|_\infty,\quad
r^1(\alpha)=\operatorname{Res}_\infty\alpha.
\]
It is a chain map because
$\operatorname{Res}_\infty(\nabla s)=R_\infty(s|_\infty)$,
and the displayed formulas show that it preserves $\mathcal W$
and $F$.  On the rational side it represents the restriction
$Rj_*V_{\Q}\to i_*i^*Rj_*V_{\Q}$.
Indeed, choose a local logarithmic frame in which
$\nabla=d+R_\infty\,dt/t$.  The positive powers of $t$ in the
logarithmic complex contract using $(m+R_\infty)^{-1}$, $m\geq1$;
these operators exist and preserve $W$.  The remaining complex is
$\mathcal B_{\mathrm{dR}}$.  Logarithmic comparison identifies it
with the local monodromy complex; a rational model for $Rj_*V_{\Q}$
is described in \cite[(4.6)]{SteenbrinkZucker}.
In standard Tate generators, the chain
isomorphism from the differential $-\log T_\infty$ to
$T_\infty-1$ is the identity in degree zero and
\[
-\sum_{m\geq0}\frac{(\log T_\infty)^m}{(m+1)!}
\]
in degree one.  The sum is finite and invertible.  This supplies
the compatible rational boundary realization of $r$.

Take the homotopy fiber
\begin{equation}
\mathcal C=\operatorname{Cone}
  (\mathcal K\xrightarrow{r}i_*\mathcal B)[-1].
\label{eq:mixed-boundary-cone}
\end{equation}
Thus $\mathcal C^n=\mathcal K^n\oplus i_*\mathcal B^{n-1}$,
with differential $(a,b)\mapsto(d a,r(a)-d b)$.  Give it the
filtrations
\[
\begin{split}
F^p\mathcal C^n
 &=F^p\mathcal K^n\oplus i_*F^p\mathcal B^{n-1},\\
\mathcal W_k\mathcal C^n
 &=\mathcal W_k\mathcal K^n
   \oplus i_*\mathcal W_{k+1}\mathcal B^{n-1}.
\end{split}
\]
The term involving $r$ vanishes on the weight-graded complex, so
\[
\operatorname{gr}^{\mathcal W}_k\mathcal C
=\operatorname{gr}^{\mathcal W}_k\mathcal K
 \oplus i_*\operatorname{gr}^{\mathcal W}_{k+1}
          \mathcal B[-1].
\]
Both summands are cohomological Hodge complexes of weight $k$.
This proves that $\mathcal C$ is a cohomological mixed Hodge
complex.  The localization triangle at $\infty$ identifies
$\mathcal C_{\Q}$ with $j_{\infty!}j_{0*}j_{1*}V_{\Q}$.
Its hypercohomology consequently gives the asserted mixed Hodge
structure.  The weight convention is
\[
W_mH^1(j_{**!}\mathbb V)=
\operatorname{im}\left(
\mathbb H^1(\PP,\mathcal W_{m-1}\mathcal C)
\longrightarrow\mathbb H^1(\PP,\mathcal C)\right).
\]

Finally, $r_{\mathrm{dR}}$ is degreewise surjective on every
$F^p$: evaluation and residue lift locally through the Hodge
subbundles.  Its kernel is
\[
\left[
\overline{\mathcal V}(-\infty)\xrightarrow{\nabla}
\overline{\mathcal V}\otimes
\Omega^1_{\PP}(\log D)(-\infty)
\right],
\]
and its $F^p$ part has the two terms displayed in
\eqref{eq:hodge-filtered-logarithmic-complex}, before the shift.
The inclusion of this kernel into $\mathcal C_{\mathrm{dR}}$
is therefore an $F$-filtered quasi-isomorphism.  This is the
one-boundary-point version of the collapse used in
\cite[p.~521, proof of Theorem~(4.30)]{SteenbrinkZucker}.
It proves both the claimed Hodge filtration and the comparison
\eqref{eq:logarithmic-comparison} after applying $[1]$.
All constructions are functorial in $\mathbb V$.
\end{proof}

The cone \eqref{eq:mixed-boundary-cone} defines the weights.
The smaller complex \eqref{eq:logarithmic-complex}, with its
$F$-filtered comparison, is the one used to calculate the Hodge
filtration in Section~\ref{sec:comparison-defect}.

At $1$, we give the vanishing cycles the mixed Hodge structure
transported from the limiting mixed Hodge structure
$\Psi_{1-z}(\mathbb V[1])(-1)$ by the logarithmic variation
isomorphism; see \cite[Section~1.1 and Lemma~1.4]{MorihikoSaito}.
We write
\begin{equation}
\Phi(j_{**!}\mathbb V)
=\Phi_{1-z}\!\left(j_{1*}\mathbb V[1]\right)
\label{eq:vanishing-cycle-mhs}
\end{equation}
and denote its Betti and de Rham realizations by
$\Phi_B(j_{**!}\mathbb V)$ and
$\Phi_{\mathrm{dR}}(j_{**!}\mathbb V)$.

\begin{prop}[Hodge--Tate property]
\label{prop:hodge-tate-output}
For $\mathbb V\in\mathcal T(X)$, the mixed Hodge structures
$\Phi(j_{**!}\mathbb V)$ and $H^1(j_{**!}\mathbb V)$ are
Hodge--Tate.  Both constructions commute with Tate twists: for every
$n$ there are natural isomorphisms
\begin{equation}
\Phi(j_{**!}(\mathbb V(n)))
  \simeq\Phi(j_{**!}\mathbb V)(n),
\qquad
H^1(j_{**!}(\mathbb V(n)))
  \simeq H^1(j_{**!}\mathbb V)(n).
\label{eq:tate-twist-compatibility}
\end{equation}
\end{prop}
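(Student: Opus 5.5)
The plan is to prove the Hodge--Tate property by computing the weight-graded pieces of both structures, and to get the twist compatibility from functoriality. The computation reduces to a dévissage along the weight filtration of $\mathbb V$, whose graded pieces are constant Tate variations $\Q_X(m)^{\oplus r}$. Both functors $\mathbb V\mapsto H^1(j_{**!}\mathbb V)$ and $\mathbb V\mapsto\Phi(j_{**!}\mathbb V)$ are exact on $\mathcal T(X)$, and they take values in mixed Hodge structures with morphisms strict for $W$.

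For $H^1$, exactness comes from the vanishing \eqref{eq:cohomological-purity} of Proposition~\ref{prop:shrinking}: a short exact sequence $0\to\mathbb V'\to\mathbb V\to\mathbb V''\to0$ in $\mathcal T(X)$ gives a triangle of cone complexes \eqref{eq:mixed-boundary-cone}. These complexes are functorial by Proposition~\ref{prop:logarithmic-hodge-realization}. The long exact sequence collapses to a short exact sequence of mixed Hodge structures. For $\Phi$, exactness follows because $\Psi_{1-z}$ is exact and the variation isomorphism is natural.

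Since the Hodge--Tate structures form a full abelian subcategory of mixed Hodge structures closed under extensions, induction on the length of the weight filtration reduces everything to $\mathbb V=\Q_X(m)$. By the twist compatibility this further reduces to $\mathbb V=\Q_X(0)$.

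The twist compatibility \eqref{eq:tate-twist-compatibility} should be formal. Tensoring $\mathbb V$ with the constant structure $\Q(n)$ multiplies the Betti realizations by $(2\pi i)^n$, leaves the flat bundle and canonical extension unchanged, and shifts $F$ and $W$ by $n$ and $-2n$. On the cone complex $\mathcal C$ and its filtrations, this is exactly $\mathcal C\otimes\Q(n)$. Hypercohomology commutes with this operation, and the resulting identification is natural in $\mathbb V$. For $\Phi$ the same holds for the limiting mixed Hodge structure $\Psi_{1-z}$ and for the variation map, since the monodromy logarithm commutes with the twist.

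It remains to handle $\mathbb V=\Q_X(0)$.

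For $\Phi$: since $\mathbb V$ is constant, $\Psi_{1-z}(\Q_X[1])$ is $\Q(0)$. The transported structure $\Phi=\Psi(-1)$ is therefore $\Q(-1)$, which is Tate.

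For $H^1$: by Proposition~\ref{prop:shrinking} we have $\dim H^0(\PP,j_{**!}\Q[1])=1$. The structure is computed from the cone $\mathcal C$, whose weight-graded pieces are the Hodge complexes $\operatorname{gr}^{\mathcal W}_k\mathcal K\oplus i_*\operatorname{gr}^{\mathcal W}_{k+1}\mathcal B[-1]$. Their Hodge constituents are all Tate: the constant piece $\Q(0)$ on $\PP$, the residue terms $\Q(-1)$ at the points of $D$, and the boundary terms $\Q(0)$ and $\Q(-1)$ at $\infty$. So $\operatorname{gr}^W$ of the hypercohomology is a subquotient of Tate structures, given by the weight spectral sequence, which degenerates at $E_2$.

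Alternatively, I would compute it directly. $H^1(\PP,j_{**!}\Q)$ sits in the exact sequence of $H^1_c$-type for $\Gm$ relative to $\infty$, and it is isomorphic to $\Q(-1)$, generated by the class of $dz/z$ in \eqref{eq:logarithmic-complex}.

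The main obstacle is making the exactness of $H^1(j_{**!}-)$ hold as a statement about mixed Hodge structures, rather than only about the underlying vector spaces. This requires that the cone construction be functorial at the level of mixed Hodge complexes, so that the connecting maps are morphisms of mixed Hodge structures. I would argue this directly from Proposition~\ref{prop:logarithmic-hodge-realization}. The filtrations on $\mathcal C$ are defined termwise from those on $\overline{\mathcal V}$, and the canonical extension is exact on unipotent connections. A short exact sequence in $\mathcal T(X)$ therefore gives a bifiltered short exact sequence of complexes, and the standard theory of mixed Hodge complexes then yields the long exact sequence in the category of mixed Hodge structures.
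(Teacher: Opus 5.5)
Your proposal is correct and follows essentially the paper's route: both functors are exact (via the vanishing \eqref{eq:cohomological-purity}, and via exactness of nearby cycles with the natural variation isomorphism), Hodge--Tate structures are closed under extensions, everything reduces to constant Tate variations, and twist compatibility is formal; your weight-spectral-sequence treatment of the base case for $H^1$ is a valid substitute for the paper's observation that the generator $\vartheta=dz/(z(1-z))$ has type $(1,1)$. One slip occurs in your optional direct computation: $dz/z=-dt/t$ (with $t=1/z$) is not a section of $\Omega^1_{\PP}(\log D)(-\infty)$ and has nonzero residue at $\infty$, so it does not define a class in \eqref{eq:logarithmic-complex}; the generator there is $\vartheta$, which has zero residue at $\infty$ and lies in $F^1$, and this gives $\Q(-1)$ directly.
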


\begin{proof}
Shrinking and the long exact cohomology sequence show that
$\mathbb V\mapsto H^1(j_{**!}\mathbb V)$ is exact on
$\mathcal T(X)$.  Vanishing cycles are exact after this perverse
shift.  On a constant Tate variation both functors have
value $\Q(n-1)$.  For cohomology this follows from the logarithmic
representative $\vartheta=dz/(z(1-z))$, of type $(1,1)$; for
vanishing cycles it follows from the variation isomorphism
\[
\Var\colon
\Phi_{1-z}(j_{1*}\mathbb V[1])
\xrightarrow{\sim}\Psi_{1-z}(\mathbb V[1])(-1).
\]
Here and below $\Var$ is the logarithmic variation
morphism, compatible with the Hodge realizations.  Its underlying
Betti map differs from the topological variation morphism by the
invertible power series relating $\log T$ to $T-1$.
Exactness and induction on the Tate filtration prove the assertion.
Tate twists commute with both constructions.
\end{proof}

\subsection{Comparison square}
\label{subsec:comparison-square}

From now on we suppress the common perverse shift and write
\[
H^1(j_{**!}V):=H^0(\PP,j_{**!}V[1]).
\]
We denote the Betti realization of the resulting cohomology by
\[
H_B^1(j_{**!}\mathbb V)
=H^0\!\left(\PP,j_{**!}V_{\Q}[1]\right).
\]
The shrinking isomorphism is compatible with weights.

\begin{prop}[Weight compatibility]
\label{prop:weight-compatibility}
For every $\mathbb V\in\mathcal T(X)$, the isomorphism
\begin{equation}
\varphi_I\colon
\Phi_B(j_{**!}\mathbb V)
\xrightarrow{\ \sim\ }
H_B^1(j_{**!}\mathbb V)
\label{eq:weighted-shrinking}
\end{equation}
is strictly compatible with the weight filtration.
\end{prop}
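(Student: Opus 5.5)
We argue by induction on the length of the weight filtration of $\mathbb V$, using exactness of both functors. By Proposition~\ref{prop:hodge-tate-output} and its proof, $\mathbb V\mapsto\Phi(j_{**!}\mathbb V)$ and $\mathbb V\mapsto H^1(j_{**!}\mathbb V)$ are exact functors from $\mathcal T(X)$ to mixed Hodge--Tate structures. The shrinking isomorphism $\varphi_I$ of Proposition~\ref{prop:shrinking} is natural in the underlying local system. Hence a short exact sequence $0\to\mathbb V'\to\mathbb V\to\mathbb V''\to0$ in $\mathcal T(X)$ gives a morphism of short exact sequences of Betti realizations. The vertical maps are $\varphi_I$ for $\mathbb V'$, $\mathbb V$ and $\mathbb V''$.

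Both sequences are short exact sequences of mixed Hodge structures. Their maps are morphisms of mixed Hodge structures, hence strictly compatible with $W$. So on each side $W$ restricts to $W$ on the subobject and induces $W$ on the quotient.

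The key step is the following. Take $\mathbb V'=W_{k}\mathbb V$, where $k$ is the lowest weight occurring, so that $\mathbb V'$ is a constant Tate variation $\Q(n)^{\oplus m}$. The exact sequences are then the weight filtrations on both sides, shifted. Consequently $\Phi(j_{**!}\mathbb V')$ and $H^1(j_{**!}\mathbb V')$ are pure of the same weight $-2(n-1)$, both being $\Q(n-1)^{\oplus m}$. These are exactly the lowest weight pieces of the outputs for $\mathbb V$, because the outputs of the graded pieces of $\mathbb V$ have distinct weights arranged in the same order.

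By induction, $\varphi_I$ for $\mathbb V''$ is strictly compatible with $W$. For $\mathbb V'$ it is an isomorphism between pure structures of equal weight, so it is trivially strict. An isomorphism that maps the sub to the sub and induces $W$-isomorphisms on sub and quotient preserves $W$. It preserves $W$ with equality because the weights of the subobject are strictly below those of the quotient. In detail, $W_m$ of the middle term equals $W_m$ of the sub when $m$ lies in the sub's range. It equals the preimage of $W_m$ of the quotient otherwise. Both descriptions are transported by $\varphi_I$.

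The main obstacle is the base case. We must check that $\varphi_I$ on a constant Tate variation matches the Tate weights, which needs both sides to be identified as $\Q(n-1)$ with the same weight. This follows from Proposition~\ref{prop:hodge-tate-output}. For the Betti map itself nothing is needed: any linear isomorphism between pure spaces of equal weight is strict. The only subtle point is that the induced weight of the quotient in a non-split extension really is the quotient filtration. This holds by strictness of morphisms of mixed Hodge structures.

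One could alternatively use the Tate twist compatibility \eqref{eq:tate-twist-compatibility} to normalize $n$. We would check directly that the numbering of weights agrees on both sides. On both sides the weight is shifted by $+2$ relative to $\mathbb V$: from the Tate twist $(-1)$ in $\Var$ for $\Phi$, and from the $(1,1)$ class $\vartheta$ for $H^1$.
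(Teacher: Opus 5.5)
Your argument is correct and uses the same ingredients as the paper: exactness of both functors, the fact that a constant Tate variation $\Q(n)$ goes to $\Q(n-1)$ on both sides, and naturality of Betti shrinking. The paper avoids your induction by invoking uniqueness of weight filtrations to get $W_m\Phi(j_{**!}\mathbb V)=\Phi(j_{**!}W_{m-2}\mathbb V)$ and $W_mH^1(j_{**!}\mathbb V)=H^1(j_{**!}W_{m-2}\mathbb V)$ for all $m$ at once; naturality with respect to $W_k\mathbb V\hookrightarrow\mathbb V$ then gives strictness directly.
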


\begin{proof}
Write $H(\mathbb V)=H^1(j_{**!}\mathbb V)$ in this proof.
Exactness and the calculation on constant Tate variations give
\[
W_mH(\mathbb V)=H(W_{m-2}\mathbb V),\qquad
W_m\Phi(j_{**!}\mathbb V)=\Phi(j_{**!}W_{m-2}\mathbb V).
\]
Indeed, the associated graded objects of these filtrations are pure
of the required weights, so these are the weight filtrations by
uniqueness.  Naturality of Betti shrinking with respect to
$W_k\mathbb V\hookrightarrow\mathbb V$ now proves strict weight
compatibility.  No filtered de Rham shrinking map is used.
\end{proof}

We now use the Hodge--Tate property.  By Proposition
\ref{prop:hodge-tate-output}, the mixed Hodge structures
$\Phi(j_{**!}\mathbb V)$ and $H^1(j_{**!}\mathbb V)$ are
Hodge--Tate.  Put
\[
\Phi_{B,\C}:=\Phi_B(j_{**!}\mathbb V)\otimes_\Q\C,
\qquad
H^1_{B,\C}:=H_B^1(j_{**!}\mathbb V)\otimes_\Q\C,
\]
and let $\varphi_{I,\C}=\varphi_I\otimes_\Q\C$.

For a Hodge--Tate mixed Hodge structure $H$, the Hodge filtration
gives a canonical splitting of the weight filtration over $\C$:
\begin{equation}
H_{B,\C}=\bigoplus_p
\bigl(F^pH_{B,\C}\cap W_{2p}H_{B,\C}\bigr),
\qquad
F^pH_{B,\C}\cap W_{2p}H_{B,\C}
\xrightarrow{\ \sim\ }
\operatorname{gr}_{2p}^W H_{B,\C}.
\label{eq:hodge-tate-splitting}
\end{equation}
Denote the inverse of the second map, summed over $p$, by
\begin{equation}
s_{H,B}^F\colon\operatorname{gr}^W H_{B,\C}
\xrightarrow{\ \sim\ }H_{B,\C}.
\label{eq:hodge-splitting}
\end{equation}
Applying this construction to the two Hodge--Tate structures gives
splittings $s_{\Phi,B}^F$ and $s_{H^1,B}^F$.  Since $\varphi_I$ is
strictly compatible with weights, it induces an isomorphism
\[
\operatorname{gr}^W(\varphi_{I,\C})\colon
\operatorname{gr}^W\Phi_{B,\C}
\xrightarrow{\ \sim\ }
\operatorname{gr}^W H^1_{B,\C}.
\]
The comparison square is
\begin{equation}
\begin{tikzcd}[column sep=large, row sep=large]
\operatorname{gr}^W\Phi_{B,\C}
  \arrow[r, "s_{\Phi,B}^F", "\sim"']
  \arrow[d, "\operatorname{gr}^W(\varphi_{I,\C})"', "\sim"]
& \Phi_{B,\C}
  \arrow[d, "\varphi_{I,\C}", "\sim"'] \\
\operatorname{gr}^W H^1_{B,\C}
  \arrow[r, "s_{H^1,B}^F"', "\sim"]
& H^1_{B,\C}.
\end{tikzcd}
\label{eq:comparison-square}
\end{equation}
The horizontal maps are generally only $\C$-linear; they need not be
rational.  The two routes through \eqref{eq:comparison-square} need
not agree.  Their discrepancy is measured by the following automorphism,
called the \emph{Hodge defect of shrinking along $I$}:
\begin{equation}
\mathcal G_I(\mathbb V)=
 s_{\Phi,B}^F\circ
 \bigl(\operatorname{gr}^W(\varphi_{I,\C})\bigr)^{-1}\circ
 \bigl(s_{H^1,B}^F\bigr)^{-1}\circ
 \varphi_{I,\C}
\quad\text{of}\quad\Phi_{B,\C}.
\label{eq:defect-automorphism}
\end{equation}
The Hodge defect is the identity if and only if $\varphi_{I,\C}$
respects the Hodge filtrations.  It preserves $W$ and induces the
identity on $\operatorname{gr}^W$;
in particular, it is unipotent.  Both objects have a common shift of
Tate degree: on $\Q_X(n)$ their value is $\Q(n-1)$.  We keep the
usual cohomological notation in Sections~1--3 and tensor both
functors with the Tate object $\Q(1)$ in Section~4.  This common twist leaves the matrix
of the Hodge defect unchanged.

For later use, let
\begin{align}
c_\Phi\colon
\Phi_{B,\C}&\xrightarrow{\ \sim\ }
\Phi_{\mathrm{dR}}(j_{**!}\mathbb V),
\label{eq:comparison-phi}\\
c_H\colon
H^1_{B,\C}&\xrightarrow{\ \sim\ }
H^1_{\mathrm{dR}}(j_{**!}\mathbb V)
\label{eq:comparison-h}
\end{align}
be the Betti--de Rham comparison isomorphisms.  We transport the
Hodge splittings to de Rham realizations by
\begin{equation}
s_{H,\mathrm{dR}}^F
=c_H\circ s_{H,B}^F\circ(\operatorname{gr}^W c_H)^{-1},
\qquad H=\Phi,H^1.
\label{eq:hodge-splitting-dr}
\end{equation}

\section{The Hodge defect}
\label{sec:comparison-defect}

\subsection{Logarithmic primitives and shrinking}
\label{subsec:nearby-cycle-calculation}

Let $\mathbb V\in\mathcal T(X)$.  Admissibility implies that
$\operatorname{gr}_F\operatorname{gr}_W\overline{\mathcal V}$
is locally free.  At each boundary point the monodromy logarithm
lowers $W$ by two, so its relative monodromy filtration is $W$.
The limiting weight-graded pieces are the same constant Tate
structures.  Thus $F$ and $W$ remain opposed on the boundary
fibers, and projection gives bundle isomorphisms
\[
\mathcal S_r:=F^{-r}\overline{\mathcal V}
             \cap W_{-2r}\overline{\mathcal V}
\xrightarrow{\sim}\operatorname{gr}_{-2r}^W\overline{\mathcal V}.
\]
Each $\mathcal S_r$ is therefore the corresponding trivial
weight-graded bundle.  With $D=\{0,1,\infty\}$, compatibility
with $W$ and Griffiths transversality give
\[
\nabla\mathcal S_r\subset
(\mathcal S_r\oplus\mathcal S_{r+1})
\otimes\Omega^1_{\PP}(\log D).
\]
The $\mathcal S_r$ component is the constant graded connection.
Taking constant sections of these bundles consequently gives
\[
\overline{\mathcal V}=E\otimes\mathcal O_{\PP},
\qquad E=\bigoplus_r E_r,
\qquad
\nabla=d-N_0\omega_0-N_1\omega_1.
\]
Here $E_r=H^0(\PP,\mathcal S_r)$, and
$N_0,N_1:E_r\to E_{r+1}$.  The residues of $\nabla$ at $0$ and $1$
are $-N_0$ and $N_1$, respectively.

We first describe the two Hodge frames used in the comparison
square.  The logarithmic complex \eqref{eq:logarithmic-complex}
gives
\[
H^1_{\mathrm{dR}}(j_{**!}\mathbb V)
 =\{[v\vartheta]:v\in E\},
\qquad \vartheta=\frac{dz}{z(1-z)}.
\]
This follows because $\mathcal O_{\PP}(-\infty)$ has no cohomology
and $\Omega^1_{\PP}(\log D)(-\infty)$ is generated by $\vartheta$.
Moreover,
$F^pH^1_{\mathrm{dR}}=\bigoplus_{r\leq1-p}E_r\vartheta$.
Thus the oriented residue is an isomorphism of filtered complex
vector spaces
\begin{equation}
\rho_1=-\operatorname{Res}_{z=1}\colon
H^1_{\mathrm{dR}}(j_{**!}\mathbb V)\xrightarrow{\sim}E(-1),
\qquad [v\vartheta]\longmapsto v.
\label{eq:oriented-residue}
\end{equation}
At $1$ we use the positive real ray in the coordinate $1-z$.
The logarithmic nearby-cycle realization is $E$, with its Tate
grading, and logarithmic variation gives
\[
\Var\colon
\Phi_{\mathrm{dR}}(j_{**!}\mathbb V)\xrightarrow{\sim}E(-1).
\]
Both maps preserve the Hodge and weight filtrations.  After
identifying their associated graded spaces with $E(-1)$, the Hodge
splittings are therefore
\begin{equation}
s_{H^1,\mathrm{dR}}^F=\rho_1^{-1},
\qquad s_{\Phi,\mathrm{dR}}^F=\Var^{-1}.
\label{eq:hodge-identification-residue}
\end{equation}
In these coordinates the associated graded shrinking is the
identity, as the calculation on constant Tate variations shows.
We use this frame for the matrix of the Hodge defect and suppress
the common Tate twist in matrices.

The calculation consists in integrating $v\vartheta$ with the
boundary condition at $\infty$ and reading its Betti class from
the jump across $I$.

\begin{prop}[Logarithmic primitive]
\label{prop:logarithmic-primitive}
For every $v\in E$ there is a unique single-valued holomorphic
function $S_v:\PP\setminus[0,1]\to E$ satisfying
\begin{equation}
\nabla S_v=v\vartheta,
\qquad S_v(z)=O(z^{-1})\quad(z\longrightarrow\infty).
\label{eq:inhomogeneous-primitive}
\end{equation}
Let $S_{v,+}$ and $S_{v,-}$ be its boundary values from the upper
and lower half-planes, respectively, and put
\begin{equation}
J_v(t)=\frac{S_{v,-}(t)-S_{v,+}(t)}{2\pi i},
\qquad 0<t<1.
\label{eq:jump-representative}
\end{equation}
Then $J_v$ is horizontal and has polynomial logarithmic growth at
$0$ and $1$.  One has
\begin{equation}
S_v(z)=\int_0^1\frac{J_v(t)}{z-t}\,dt,
\qquad
(1+N_0-N_1)\int_0^1J_v(t)\,dt=v.
\label{eq:cauchy-primitive}
\end{equation}
Under Betti--de Rham comparison, the horizontal section
$2\pi iJ_v$ represents
$\var\,\varphi_I^{-1}c_H^{-1}[v\vartheta]$.
\end{prop}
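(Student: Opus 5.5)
The plan is to build $S_v$ first near $\infty$ and then continue it analytically, and to derive every other claim from its local expansion at $\infty$ and its jump across $I$.

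\emph{Existence and uniqueness.} In the coordinate $t=1/z$, using the frame $\overline{\mathcal V}=E\otimes\mathcal O_{\PP}$ of Subsection~\ref{subsec:nearby-cycle-calculation}, one has
\[
\omega_0=-\frac{dt}{t},\qquad \omega_1=\frac{dt}{t(1-t)},\qquad \vartheta=\frac{dt}{1-t}.
\]
Hence $\nabla=d+R_\infty\,dt/t+(\text{holomorphic})$ with $R_\infty=N_0-N_1$ nilpotent, and $\vartheta$ is holomorphic at $t=0$. I will look for $S_v=\sum_{m\ge1}s_mt^m$. Comparing coefficients gives recursions $(m+R_\infty)s_m=(\text{terms in }v,s_1,\dots,s_{m-1})$. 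These are solvable because $m+R_\infty$ is invertible for $m\ge1$, and the resulting series converges on $|t|<1$. The domain $\PP\setminus[0,1]$ is simply connected, so variation of constants with a fundamental solution $G$ continues the solution as $S_v=G\bigl(c+\int G^{-1}v\vartheta\bigr)$ to the whole domain. For uniqueness, the difference of two solutions is horizontal on a simply connected domain, hence locally of the form $t^{-R_\infty}c$ near $\infty$. Such a section is $O(t)$ only if $c=0$. The leading coefficient satisfies $(1+N_0-N_1)s_1=v$; this will give the second identity in \eqref{eq:cauchy-primitive}.

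\emph{The jump and the Cauchy formula.} The boundary values $S_{v,\pm}$ satisfy the same inhomogeneous equation on $(0,1)$, so $J_v$ is horizontal. Near $0$ and $1$ the fundamental solution is $z^{N_0}$-type, resp.\ $(1-z)^{-N_1}$-type, hence polynomial in logarithms. The primitive of $v\vartheta$ adds at most one more logarithm, which gives polynomial logarithmic growth of $S_v$ and of $J_v$. I then apply Cauchy's formula to $S_v$ on a large circle minus a thin keyhole around $[0,1]$. The large circle contributes nothing because $S_v=O(z^{-1})$. The small circles around $0$ and $1$ vanish in the limit because $r\log^k r\to0$. The two edges of the slit give $\int_0^1 J_v(t)\,dt/(z-t)$; the orientation must be checked so that the factor is exactly $1/2\pi i$ with the sign in \eqref{eq:jump-representative}. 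Expanding this integral at $z=\infty$ gives $S_v=z^{-1}\int_0^1J_v+O(z^{-2})$. Hence $s_1=\int_0^1J_v$, and the recursion at $m=1$ yields $(1+N_0-N_1)\int_0^1J_v=v$.

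\emph{Betti interpretation; the expected main obstacle.} By Proposition~\ref{prop:logarithmic-hodge-realization}, $[v\vartheta]$ corresponds to a class in $H^0(\PP,j_{**!}V[1])$. On $\PP\setminus[0,1]$, with the vanishing condition at $\infty$, this class is trivialized by $S_v$. It therefore lies in cohomology supported on $I$, which is the left side of \eqref{eq:shrinking-supported-cohomology}. The connecting map of local cohomology sends the trivialization to its discontinuity across the cut, namely the horizontal section $S_{v,-}-S_{v,+}=2\pi iJ_v$ on $(0,1)$. By the Remark after Proposition~\ref{prop:shrinking}, horizontal sections over $(0,1)$ are exactly the model for $H^0$, and $\var\varphi_I^{-1}$ takes them to their nearby value at $1$. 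This identifies $2\pi iJ_v$ with $\var\,\varphi_I^{-1}c_H^{-1}[v\vartheta]$. I expect the hardest step to be pinning down the normalizations: the sign from which side of the cut is subtracted, the orientation of $I$, and the factor $2\pi i$ in the comparison \eqref{eq:logarithmic-comparison}. I would calibrate these on the rank-one case $\mathbb V=\Q_X(0)$. There $S_v=v\log\bigl(z/(z-1)\bigr)$, so $J_v=v$ is constant, which matches $\int_0^1J_v=v$ with $N_0=N_1=0$.
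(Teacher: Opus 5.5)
Your proposal is correct and follows the paper's proof essentially step for step: the power-series solution at $\infty$ via invertibility of $m+N_0-N_1$, continuation to the slit sphere, Cauchy's formula around the slit with the sign fixed by the constant connection, reading $(1+N_0-N_1)\int_0^1J_v\,dt=v$ off the $z^{-1}$ coefficient, and identifying the Betti class with the jump $S_{v,-}-S_{v,+}$ through cohomology supported on $I$. The paper spells out two points that your ``connecting map'' phrasing leaves implicit: it computes the supported class locally by subtracting a holomorphic primitive $R$ of $v\vartheta$ near an interior point of $I$, and it notes that the zero costalks of the $*$-extensions at $0$ and $1$ let one drop the endpoints from the support.
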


\begin{proof}
In the coordinate $u=z^{-1}$, the equation has a unique power
series solution with zero constant term: the recursion only
inverts operators of the form $m+N_0-N_1$, $m\geq1$.  These are
invertible because $N_0-N_1$ is nilpotent.  The regular-singular
equation gives a convergent solution, which continues to the
simply connected slit sphere.  Successive integration along the
Tate filtration gives polynomial logarithmic growth at the ends
of the slit.

The difference of the boundary values is horizontal.  Cauchy's
formula, applied around the slit, gives the first equality in
\eqref{eq:cauchy-primitive}.  The small circles around $0$ and $1$
contribute zero in the limit, since their lengths tend to zero
faster than any power of the logarithm grows.  For the constant
connection,
$S_v=v(\log z-\log(z-1))$ has lower-minus-upper jump $2\pi i v$,
which fixes the sign.  Writing
$M_v=\int_0^1J_v(t)\,dt$, we obtain
$S_v(z)=z^{-1}M_v+O(z^{-2})$.  Substitution into
\[
\nabla S_v=v(-z^{-2}+O(z^{-3}))\,dz,
\qquad
\frac{N_0}{z}+\frac{N_1}{1-z}
 =\frac{N_0-N_1}{z}+O(z^{-2}),
\]
gives $(1+N_0-N_1)M_v=v$.

It remains to identify the Betti class.  Put
$U=\PP\setminus I$.  Since $\nabla S_v=v\vartheta$ on $U$, the pair
$(v\vartheta,S_v)$ represents a class in the relative de Rham
complex with support on $I$.  The decay of $S_v$ is precisely the
degree-zero $!$ condition at $\infty$.  Forgetting supports gives
$[v\vartheta]$.  On a small disk about an interior point of $I$,
choose a holomorphic primitive $R$ of $v\vartheta$.  Subtracting
its relative coboundary leaves the two horizontal sections
$S_{v,+}-R$ and $S_{v,-}-R$.  Their difference is
$S_{v,-}-S_{v,+}$.

The $*$-extensions at $0$ and $1$ have zero costalks, so removing
these points from the support does not change the supported
cohomology.  The construction of Proposition~\ref{prop:shrinking}
identifies restriction to the interior with topological
variation at $1$.  With the orientation just
fixed, the difference above therefore represents
$\var\,\varphi_I^{-1}c_H^{-1}[v\vartheta]$.
\end{proof}

To pass from this Betti class to its Hodge coordinate, we must
distinguish topological and logarithmic variation.  In the
nearby-cycle de Rham frame, let
\begin{equation}
T_1=\exp(-2\pi iN_1),
\qquad
U_1=\sum_{m\geq0}\frac{(-2\pi iN_1)^m}{(m+1)!}.
\label{eq:variation-unit}
\end{equation}
This finite sum is invertible, with constant term $1$; the
expression $U_1=(T_1-1)/\log T_1$ denotes this power series, not
division by the nilpotent operator $\log T_1$.
Let $c_\Psi$ denote comparison from Betti nearby cycles to $E$.
Then, in the standard Tate frames,
\begin{equation}
\Var\circ c_\Phi
 =\frac{1}{2\pi i}\,U_1^{-1}c_\Psi \var.
\label{eq:variation-comparison}
\end{equation}
Logarithmic variation is obtained by applying the invertible
power series $\log T_1/(T_1-1)$ to topological variation, with
target $\Psi(-1)$.
The factor $2\pi i$ in \eqref{eq:variation-comparison} accounts for
this Tate twist.  Throughout, $\varphi_I$ remains the shrinking
map of Proposition~\ref{prop:shrinking}.

\subsection{The defect matrix and its word coefficients}
\label{subsec:defect-matrix}

Let $G_1$ be the horizontal matrix solution characterized by
\begin{equation}
dG_1=(N_0\omega_0+N_1\omega_1)G_1,
\qquad
G_1(z)(1-z)^{N_1}\longrightarrow1\quad(z\longrightarrow1).
\label{eq:nearby-solution}
\end{equation}
We take real logarithms on $0<z<1$, using the same nearby-cycle
lift as in Subsection~\ref{subsec:nearby-cycle-calculation}.
Thus a horizontal section with nearby-cycle coordinate $w$ at
$1$ is $G_1(z)w$.

\begin{prop}[Matrix of the Hodge defect]
\label{prop:defect-matrix}
For the shrinking map of Proposition~\ref{prop:shrinking}, the
matrix of the Hodge defect in the nearby-cycle Hodge frame is
\begin{equation}
\mathcal G_I(\mathbb V)=
(1+N_0-N_1)\left(\int_0^1G_1(t)\,dt\right)U_1.
\label{eq:defect-matrix}
\end{equation}
The integral converges entrywise.  This matrix preserves weights
and induces the identity on the associated graded.
\end{prop}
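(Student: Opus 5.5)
The plan is to compute the matrix of $\mathcal G_I(\mathbb V)$ by transporting every map in \eqref{eq:defect-automorphism} into the de Rham frames of Subsection~\ref{subsec:nearby-cycle-calculation}. I will identify $\Phi_{B,\C}$ with $E(-1)$ via $\Var\circ c_\Phi$ and $H^1_{B,\C}$ with $E(-1)$ via $\rho_1\circ c_H$. By \eqref{eq:hodge-identification-residue} the Hodge splittings then become identities, and the associated graded shrinking is also the identity in these coordinates. Hence the matrix of the defect in the nearby-cycle Hodge frame reduces to
\[
\mathcal G_I(\mathbb V)=\rho_1\, c_H\,\varphi_{I,\C}\, c_\Phi^{-1}\,\Var^{-1}
\]
as an endomorphism of $E$, with the common Tate twist suppressed. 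The first step is to check this reduction carefully, unwinding \eqref{eq:defect-automorphism} and \eqref{eq:hodge-splitting-dr} and keeping track of which side each $\operatorname{gr}^W$ acts on.

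Next I compute the inverse map $v\mapsto w$ using Proposition~\ref{prop:logarithmic-primitive}. Fix $v\in E$ and the class $[v\vartheta]$. The proposition says the topological variation $\var\,\varphi_I^{-1}c_H^{-1}[v\vartheta]$ is represented by the horizontal section $2\pi i J_v$ on $(0,1)$. By the convention after \eqref{eq:nearby-solution}, I write $J_v(t)=G_1(t)w_v$, where $w_v$ is the nearby-cycle coordinate at $1$. Then \eqref{eq:variation-comparison} gives
\[
\Var\, c_\Phi\,\varphi_I^{-1}c_H^{-1}[v\vartheta]=\tfrac{1}{2\pi i}U_1^{-1}(2\pi i\,w_v)=U_1^{-1}w_v .
\]
On the other side, \eqref{eq:cauchy-primitive} gives $v=(1+N_0-N_1)\int_0^1G_1(t)\,dt\;w_v$. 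Setting $w=U_1^{-1}w_v$ yields $v=(1+N_0-N_1)\bigl(\int_0^1G_1\bigr)U_1w$, which is exactly \eqref{eq:defect-matrix}.

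For convergence, I will note that $G_1$ has at most polynomial logarithmic growth at $0$ and $1$. Near $1$ this follows from the normalization $G_1(1-z)^{N_1}\to1$ with $N_1$ nilpotent. Near $0$ it follows because $G_1$ is $G_0\cdot C$ with $G_0\sim z^{N_0}$. Logarithmic singularities are integrable, so the integral converges entrywise.

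For unipotence, every operator involved ($N_0$, $N_1$, and the higher terms of $G_1$ and $U_1$) raises Tate degree. The degree-zero parts of the three factors are therefore $1$, $\int_0^1 1\,dt=1$, and $1$, so the product preserves $W$ and is the identity on $\operatorname{gr}^W$.

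I expect the main obstacle to be the bookkeeping of normalizations rather than any analytic point. Three sign and normalization conventions must all agree:
\begin{itemize}
\item the orientation of the jump $J_v$ (lower minus upper);
\item the choice of the real nearby-cycle lift at $1$ for both $G_1$ and $c_\Psi$;
\item the $2\pi i$ and $U_1$ factors relating topological and logarithmic variation.
\end{itemize}
Only if they agree does $\operatorname{gr}^W\varphi_I$ become literally the identity in these frames. I would first check all three on a constant Tate variation, where $J_v=v$ and every factor in \eqref{eq:defect-matrix} is $1$. I would then check them on a single Kummer extension as a sanity test.
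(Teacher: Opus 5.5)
Your proposal is correct and follows the paper's proof essentially step for step. The paper also writes $J_v=G_1(t)w_v$, uses \eqref{eq:variation-comparison} to get $\Var(c_\Phi\varphi_I^{-1}c_H^{-1}[v\vartheta])=U_1^{-1}w_v=\mathcal G_I(\mathbb V)^{-1}v$ via the Hodge frames \eqref{eq:hodge-identification-residue}, and substitutes this into the second identity of \eqref{eq:cauchy-primitive}; convergence comes from polynomial logarithmic growth, and the identity on $\operatorname{gr}^W$ from the fact that all nonconstant terms raise Tate degree. Your explicit reduction $\mathcal G_I(\mathbb V)=\rho_1c_H\varphi_{I,\C}c_\Phi^{-1}\Var^{-1}$ is just the unwound form of the paper's sentence ``the left-hand side is $\mathcal G_I(\mathbb V)^{-1}v$.'' Like the paper, it rests on the constant-Tate check that $\operatorname{gr}^W\varphi_I$ is the identity in these frames.
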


\begin{proof}
Fix $v\in E$ and write $J_v(t)=G_1(t)w_v$.  By
Proposition~\ref{prop:logarithmic-primitive}, the topological
variation of the inverse shrinking of $[v\vartheta]$ has
nearby-cycle coordinate $2\pi i w_v$.  Equation
\eqref{eq:variation-comparison} therefore gives
\[
\Var (c_\Phi\varphi_I^{-1}c_H^{-1}[v\vartheta])
 =U_1^{-1}w_v.
\]
The left-hand side is $\mathcal G_I(\mathbb V)^{-1}v$:
this is the comparison square in the Hodge frames
\eqref{eq:hodge-identification-residue}.  Consequently,
\begin{equation}
J_v(t)=G_1(t)U_1\mathcal G_I(\mathbb V)^{-1}v.
\label{eq:jump-defect}
\end{equation}
Substituting this identity into the second equality of
\eqref{eq:cauchy-primitive} proves \eqref{eq:defect-matrix}.
Polynomial logarithmic growth gives convergence of the integral.
All nonconstant terms in the displayed matrix strictly raise
Tate degree, proving the last assertion.
\end{proof}

The factor $U_1$ is already visible for a single Kummer extension.
For $N_0=0$ and $N_1^2=0$ one has
$G_1(t)=1-N_1\log(1-t)$, and hence
\[
(1-N_1)\int_0^1G_1(t)\,dt=1,
\qquad
\mathcal G_I=1-\pi iN_1.
\]
Thus the integral factor alone is not the matrix of the Hodge defect
of the original shrinking map.

We now give a formula for every entry.  For a word
$w=i_1\cdots i_n$, use the basis $q_0,\ldots,q_n$ of
Subsection~\ref{subsec:word-extensions}.  The $(b,a)$-entry, $b>a$,
reads the consecutive letters
\begin{equation}
v_{ba}=i_{n-b+1}\cdots i_{n-a}.
\label{eq:word-subwords}
\end{equation}
In the integrals below,
$\omega_0(t)=dt/t$ and $\omega_1(t)=dt/(1-t)$.
For a word connection, evaluate \eqref{eq:defect-matrix} at
$N_0^{(w)}$ and $N_1^{(w)}$ to obtain $\mathcal G_I(\mathcal E_w)$.

\begin{prop}[Word coefficients]
\label{prop:word-defect-matrix}
Put $\ell_\varnothing=1$ and $\ell_{1^m}=1$ for $m\geq1$.
Every word containing a zero has a unique expression
$v=a_1\cdots a_r1^m$ with $a_r=0$.  For such a word put
\begin{equation}
\ell_v=\frac{(-1)^r}{m!}
\int_{0<t_1<\cdots<t_r<1}
t_1\,\omega_{a_1}(t_1)\cdots\omega_{a_r}(t_r)
\bigl(-\log(1-t_r)\bigr)^m.
\label{eq:word-moment-integral}
\end{equation}
Define $k_\varnothing=1$ and
\[
k_{au}=\ell_{au}+\varepsilon_a\ell_u,
\qquad \varepsilon_0=1,\quad\varepsilon_1=-1.
\]
Then set
\begin{equation}
g_v=\sum_{v=u1^m}k_u\,\frac{(-2\pi i)^m}{(m+1)!}.
\label{eq:word-coefficient-formula}
\end{equation}
The sum runs over all ways to remove a final string of ones from
$v$, including the empty string ($m=0$).  In particular,
$g_\varnothing=1$.  The matrix of the Hodge defect has entries
\begin{equation}
(\mathcal G_I(\mathcal E_w))_{ba}=
\begin{cases}
0,&b<a,\\
1,&b=a,\\
g_{v_{ba}},&b>a.
\end{cases}
\label{eq:word-defect-entries}
\end{equation}
All the integrals in \eqref{eq:word-moment-integral} converge.
\end{prop}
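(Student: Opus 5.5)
We will start from Proposition~\ref{prop:defect-matrix}, evaluated at the nilpotent matrices $N_0^{(w)},N_1^{(w)}$. Every entry of $\mathcal G_I(\mathcal E_w)$ is then the coefficient of a subword $v_{ba}$ in a universal noncommutative series in two letters $e_0,e_1$. The plan is to expand each of the three factors $(1+N_0-N_1)$, $\int_0^1G_1$ and $U_1$ as such a series, with the convention of Subsection~\ref{subsec:word-extensions} that the last letter acts first. The triangular shape and the unit diagonal follow at once, since $N_a^{(w)}$ raise the index. So only $b>a$ matters, and there the claim is that the coefficient of a word $v$ in
\[
(1+e_0-e_1)\Bigl(\int_0^1G_1\Bigr)\sum_{m\ge0}\frac{(-2\pi i e_1)^m}{(m+1)!}
\]
equals $g_v$.

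\emph{Right factor.} Multiplying by $U_1$ on the right strips a final block $1^m$ from $v$ with weight $(-2\pi i)^m/(m+1)!$. This is exactly the sum in \eqref{eq:word-coefficient-formula}. It therefore suffices to show that the coefficient of $u$ in $(1+e_0-e_1)\int_0^1G_1$ is $k_u$. The left factor prepends a letter with sign $\varepsilon_a$ to the coefficient of the shortened word, which gives $k_{au}=\ell_{au}+\varepsilon_a\ell_u$ for the empty-plus-first-letter decomposition. So the remaining claim is that the coefficient $\ell_u$ of $u$ in $\int_0^1G_1(t)\,dt$ is given by \eqref{eq:word-moment-integral}, with $\ell_{1^m}=1$.

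\emph{Middle factor.} I would write $G_1$ as an iterated-integral series regularized at $1$:
\[
G_1(t)=\sum_u \bigl(\text{iterated integral from } 1 \text{ to } t\bigr)\,e_u,
\]
with the leading $e_1$-block handled by $(1-t)^{-N_1}$. Concretely, $G_1(t)=\tilde G(t)(1-t)^{-N_1}$, where $\tilde G$ solves the equation with the $\omega_1$-pole at $1$ regularized. Then $\int_0^1 t^{\,0}\,G_1\,dt$ becomes an iterated integral. The extra factor $t_1$ in \eqref{eq:word-moment-integral} is to be produced by integrating by parts once: $dt=d(t)$, and the boundary term vanishes at $t=0$. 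This converts $\int_0^1 G_1\,dt$ into $G_1$ evaluated with one additional innermost integration against $t\,\omega_{a_1}$. The final factor $(-\log(1-t_r))^m/m!$ is the expansion of $(1-t)^{-N_1}$ applied to the trailing ones. For $u=1^m$ the integral $\int_0^1(-\log(1-t))^m/m!\,dt=1$ gives $\ell_{1^m}=1$. The sign $(-1)^r$ and the order of the letters come from the orientation (integrating from $t$ up to $1$ versus from $0$) and from the column convention. I would fix both by checking against the Kummer example $\mathcal G_I=1-\pi iN_1$ and against $N_1=0$, where $G_1=t^{N_0}$ up to normalization and $(1+N_0)\int_0^1 t^{N_0}dt=1$.

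\emph{Convergence.} Since $a_r=0$, the outermost form $\omega_0(t_r)$ at $t_r\to1$ is regular, so $(\log(1-t_r))^m$ is integrable. At the lower end, $t_1\omega_{a_1}(t_1)$ is bounded near $0$, and every inner singularity is logarithmic. Hence all integrals converge absolutely.

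The main obstacle is the middle step. The delicate bookkeeping is to match the regularization at $1$ in \eqref{eq:nearby-solution} with the integration by parts that creates the moment factor $t_1$, while keeping track of the signs and of the letter order. I would first verify words of length at most two by direct computation, and only then write the general induction on the number $r$ of zero-terminated letters.
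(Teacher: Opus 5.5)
Your proposal follows the paper's proof: the column convention reduces the entries to word coefficients, right multiplication by $U_1$ strips trailing ones, and left multiplication by $1+N_0-N_1$ gives $k_{au}=\ell_{au}+\varepsilon_a\ell_u$. The coefficient $\ell_u$ then comes from the coefficients $f_u$ of $G_1$, obtained by integrating $df_{au}=\omega_a f_u$ from $1$ starting at $f_{1^m}=(-\log(1-t))^m/m!$, followed by one Fubini swap that produces the weight $t_1$. Your integration by parts is that swap, and its boundary term at $t=1$ also vanishes, since $f_u(1)=0$ for words containing a zero.

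When you write it out, three points need tightening:
\begin{itemize}
\item Take the sign $(-1)^r$ directly from one minus sign per letter integrated from $t$ up to $1$, not from examples.
\item The factorization $G_1=\tilde G(1-t)^{-N_1}$ is not needed, and it misplaces the logarithm: in the formula it sits at $t_r$, not at $t$.
\item For convergence, use the paper's nested bound $O\bigl((1-t)(1+|\log(1-t)|)^m\bigr)$, which comes from $a_r=0$. Intermediate letters $1$ give genuine poles at $1$, not logarithmic ones, and only this vanishing factor absorbs them.
\end{itemize}
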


\begin{proof}
The coefficient of $1^m$ in $G_1(z)$ is
\begin{equation}
f_{1^m}(z)=\frac{(-\log(1-z))^m}{m!}.
\label{eq:pure-one-coefficient}
\end{equation}
For $v=a_1\cdots a_r1^m$, $a_r=0$, successive integration of
\eqref{eq:nearby-solution} from $1$ gives its coefficient as
\begin{equation}
f_v(z)=\frac{(-1)^r}{m!}
\int_{z<t_1<\cdots<t_r<1}
\omega_{a_1}(t_1)\cdots\omega_{a_r}(t_r)
\bigl(-\log(1-t_r)\bigr)^m.
\label{eq:nearby-word-integral}
\end{equation}
The last form is $\omega_0$, so the innermost integral is
$O((1-t)(1+|\log(1-t)|)^m)$ at $1$.  Successive integrations
against either $\omega_0$ or $\omega_1$ preserve a bound of this
form, and give at most polynomial logarithmic growth at $0$.
Therefore $\int_0^1f_v(z)\,dz$
converges absolutely.  Integrating first over $0<z<t_1$ proves
\eqref{eq:word-moment-integral}, while
$\int_0^1f_{1^m}(z)\,dz=1$.

Thus $\ell_v$ is the coefficient of $v$ in $\int_0^1G_1(t)\,dt$.
Left multiplication by $1+N_0-N_1$ gives $k_v$.  Right
multiplication by $U_1$ appends a string of ones, giving exactly
\eqref{eq:word-coefficient-formula}.  The column-action convention
then gives \eqref{eq:word-defect-entries}.
\end{proof}

For example,
\[
\ell_0=-1,\quad \ell_1=1,\quad
\ell_{10}=\zeta(2)-1,\quad \ell_{01}=-1,
\]
so
\[
g_0=0,\qquad g_1=-\pi i,\qquad
g_{10}=\zeta(2),\qquad g_{01}=0.
\]
For the words $10$ and $01$, this gives
\[
\mathcal G_I(\mathcal E_{10})=
\begin{pmatrix}1&0&0\\0&1&0\\\zeta(2)&-\pi i&1\end{pmatrix},
\qquad
\mathcal G_I(\mathcal E_{01})=
\begin{pmatrix}1&0&0\\-\pi i&1&0\\0&0&1\end{pmatrix}.
\]
For $01$, the coefficient $g_{01}$ vanishes, but the entry
corresponding to its final letter $1$ is $-\pi i$.

\section{Holonomy and the Drinfeld associator}
\label{sec:holonomy-associator}

\subsection{Holonomy}
\label{subsec:two-shrinkings}

The shrinking construction applies at either end of the interval.
For a unipotent local system $L$, write
\[
\varphi_{I,a}\colon
\Phi_a(j_{**!}L[1])\xrightarrow{\sim}
H^0(\PP,j_{**!}L[1]),\qquad a=0,1,
\]
where the local parameters are $z$ and $1-z$, respectively, and the
nearby-cycle fibers are taken in the directions determined by $I$.
These maps are the semi-holonomies of
\cite[Section~6]{MarkarianConvolution}.
Since the extension is $*$ at both ends, topological variation gives
isomorphisms
$\var_a\colon\Phi_a(j_{**!}L[1])\xrightarrow{\sim}\Psi_a(L[1])$.

\begin{prop}[Holonomy]
\label{prop:interval-holonomy}
The composition
\begin{equation}
\var_1\circ\varphi_{I,1}^{-1}\circ
\varphi_{I,0}\circ(\var_0)^{-1}
\colon\Psi_0(L[1])\xrightarrow{\sim}\Psi_1(L[1])
\label{eq:interval-holonomy}
\end{equation}
is parallel transport along $I$ from $0$ to $1$.
\end{prop}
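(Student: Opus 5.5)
The plan is to use the concrete description of $H^0(\PP,j_{**!}L[1])$ from the Remark after Proposition~\ref{prop:shrinking}: this group is identified with the space $\Gamma((0,1),L)$ of horizontal sections of $L$ over the open interval. Under this identification the composite $\var_1\varphi_{I,1}^{-1}$ sends a section to its nearby-cycle value at $1$, taken along the direction determined by $I$. The first step is to prove the symmetric statement at $0$. The supported-cohomology shrinking of \cite[Proposition~2]{MarkarianConvolution} is symmetric under $z\mapsto 1-z$: this involution preserves $X$, exchanges $0$ and $1$, fixes $\infty$, and preserves $I$ as a set while reversing its orientation. The $j_{**!}$-type is preserved because both finite ends carry $*$-extensions. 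Transporting the Remark through this involution then shows that $\var_0\varphi_{I,0}^{-1}$ sends a section over $(0,1)$ to its nearby-cycle value at $0$, again in the direction determined by $I$.

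Granting this, the composite \eqref{eq:interval-holonomy} is computed as follows. Take $\psi\in\Psi_0(L[1])$, meaning a horizontal section on a small sector at $0$ containing the initial segment of $I$. The map $\varphi_{I,0}\var_0^{-1}$ sends $\psi$ to the unique section $s$ over $(0,1)$ whose germ at $0^+$ is $\psi$. The map $\var_1\varphi_{I,1}^{-1}$ then sends $s$ to its germ at $1^-$. Extending a germ at $0^+$ along $(0,1)$ and reading off the germ at $1^-$ is precisely parallel transport along $I$. Naturality in $L$ is automatic, since each step is functorial.

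The main obstacle is making the identification in the Remark precise at both ends at the same time, with consistent sign and orientation conventions. Concretely, one must check that the class in $H_{(0,1]}^0$ (respectively $H^0_{[0,1)}$) given by a horizontal section on the interior maps under $\var_a$ to the limiting value, and not to its negative or to a monodromy-twisted value. I would first try to settle this directly from the variation triangle. Since $i_a^!j_{a*}=0$, the isomorphism $\var_a$ is the connecting map from sections on a punctured half-disk supported near the cut to nearby cycles. Evaluating this on a cocycle represented by the section on one side of $I$ fixes the sign. As a check, a sign error would be detected by the rank-one constant case. There the composite must be the identity, and the two semi-holonomies each introduce the same orientation sign, which therefore cancels in $\varphi_{I,1}^{-1}\varphi_{I,0}$. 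This cancellation is what makes the statement independent of the sign convention chosen for $\var$.
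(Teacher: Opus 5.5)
Your overall route is the paper's: identify $H^0(\PP,j_{**!}L[1])$ with horizontal sections of $L$ on $(0,1)$ through the supported-cohomology model of shrinking, note that each $\var_a\varphi_{I,a}^{-1}$ reads off the nearby-cycle value at the end $a$, and compose. The paper takes both endpoint statements directly from the supported-cohomology construction of \cite[Section~1]{MarkarianConvolution}. The conventions at both ends are fixed by the single oriented interval $I$, so both maps read off values of the \emph{same} section under \emph{one} identification. You instead derive the end-$0$ statement by transporting the Remark under $\sigma(z)=1-z$. That is where there is a genuine gap, and it is exactly the sign you flag.

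The identification of $H^0$ with $\Gamma((0,1),L)$ is not orientation-free. It is a Thom isomorphism for the real curve $(0,1)\subset X$, i.e.\ a choice of which side of the cut is subtracted from which; Proposition~\ref{prop:logarithmic-primitive} uses lower minus upper at $1$. The involution $\sigma$ is holomorphic, but it reverses $I$ and exchanges the upper and lower half-planes. Transporting the end-$1$ rule therefore gives, at $0$, the rule ``upper-minus-lower jump of the decaying primitive''.

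For $[\vartheta]$ with primitive $S=\log z-\log(z-1)$, the end-$1$ rule gives $S_--S_+=2\pi i$. The transported end-$0$ rule gives $S_+-S_-=-2\pi i$. So your argument, as written, yields $-1$ on the constant rank-one system, i.e.\ minus parallel transport. To avoid this you would have to show additionally that the semi-holonomy at the initial end carries a compensating orientation sign.

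Your cancellation paragraph does not supply this. ``There the composite must be the identity'' assumes the conclusion. A common change of sign in both $\var_a$, or in both $\varphi_{I,a}$, does cancel, but that is not the issue. The issue is the relative sign between the two ends, and $\sigma$ flips it.

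To close the gap, drop the symmetry step. Verify at $0$ directly, as the Remark does at $1$, that $\var_0\varphi_{I,0}^{-1}$ is evaluation at $0$ for the \emph{same} identification of $H^0$ with sections over $(0,1)$, with the side convention fixed by the orientation of $I$ rather than transported by $\sigma$. This is what the paper's appeal to the supported-cohomology construction with ``nearby-cycle fibers taken in the directions determined by $I$'' provides.
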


\begin{proof}
By the supported-cohomology construction of shrinking, the composition
identifies the nearby-cycle values at the two ends of the same horizontal
section on $(0,1)$; see \cite[Section~1]{MarkarianConvolution}.
This is parallel transport along $I$.
\end{proof}

\subsection{Associator}
\label{subsec:drinfeld-series}

Use noncommuting variables $e_0,e_1$ in place of $N_0,N_1$.
Consider
\begin{equation}
dG=(e_0\omega_0+e_1\omega_1)G.
\label{eq:universal-kz-equation}
\end{equation}
Its canonical solutions are characterized by
\begin{equation}
G_0(z)z^{-e_0}\longrightarrow1\quad(z\longrightarrow0),
\qquad
G_1(z)(1-z)^{e_1}\longrightarrow1\quad(z\longrightarrow1).
\label{eq:canonical-solutions}
\end{equation}
The constant series
\begin{equation}
\Ass=G_1^{-1}G_0
\label{eq:drinfeld-associator}
\end{equation}
is the Drinfeld associator defined in
\cite[Section~2]{Drinfeld}, in the convention of
\eqref{eq:universal-kz-equation}; see also
\cite[Section~4]{LeMurakami}.
The matrix of interval holonomy in these nearby-cycle frames is
$\Ass(N_0,N_1)$.

Write the leftmost-letter decompositions
\begin{equation}
\Ass=1+e_0a_0+e_1a_1,\qquad
\Ass^{-1}=1+e_0b_0+e_1b_1.
\label{eq:associator-left-decomposition}
\end{equation}

\begin{theorem}
\label{thm:defect-associator}
The universal series whose word evaluations are the matrices of the
Hodge defect of Section~\ref{sec:comparison-defect} is
\begin{equation}
\mathcal G_I=(1+e_0a_0)\Ass^{-1}U(e_1)
=(1+e_1b_1)U(e_1),
\label{eq:defect-associator}
\end{equation}
where
\[
U(e_1)=\sum_{m\geq0}\frac{(-2\pi i)^m e_1^m}{(m+1)!}.
\]
Its coefficients belong to the algebra generated by multiple
zeta values and $2\pi i$.  The coefficients of
$\mathcal G_IU(e_1)^{-1}$ are precisely the constant term and the
coefficients of $\Ass^{-1}$ on words beginning in $e_1$.
\end{theorem}

\begin{proof}
All integrals below are understood coefficientwise.
Cancel the common invertible right factor $U(e_1)$.  Since
$G_1=G_0\Ass^{-1}$, Proposition~\ref{prop:defect-matrix} reduces
the first equality to
\begin{equation}
(1+e_0-e_1)\int_0^1G_0(t)\,dt=1+e_0a_0.
\label{eq:associator-moment}
\end{equation}
Let $g_v^0(t)$ be the coefficient of a word $v$ in $G_0(t)$.
For a word $0u$, integration by parts and
$(g_{0u}^0)'=g_u^0/t$ give
\[
\int_0^1g_{0u}^0(t)dt+\int_0^1g_u^0(t)dt
=g_{0u}^0(1).
\]
The limiting value at $1$ exists because the first letter is $0$; it is
the coefficient of $0u$ in $\Ass$.  At $0$, $t g_{0u}^0(t)$ tends to
zero.  For a word $1u$, use
$(g_{1u}^0)'=g_u^0/(1-t)$ and integrate
$((t-1)g_{1u}^0)'$.  Both boundary terms vanish, and hence
\[
\int_0^1g_{1u}^0(t)dt-\int_0^1g_u^0(t)dt=0.
\]
These two identities, together with the constant coefficient,
prove \eqref{eq:associator-moment}.
The equation $\Ass\Ass^{-1}=1$, compared on words beginning in $e_1$,
gives $b_1=-a_1\Ass^{-1}$.  It follows that
$(1+e_0a_0)\Ass^{-1}=1+e_1b_1$, proving the second equality.
The coefficients of $\Ass^{-1}$ belong to the algebra of multiple
zeta values, since those of $\Ass$ do and inversion is polynomial
in each word length.  Multiplication by $U(e_1)$ adjoins only
powers of $2\pi i$, proving the coefficient assertion.
\end{proof}

To compare with the notation of \cite[Section~6]{MarkarianConvolution},
write the rightmost-letter decomposition of any series as

\[
f=1+f_0e_0+f_1e_1,
\qquad f_Y:=1+f_1e_1.
\]
Thus ${\Ass}_Y$ means this operation applied to the associator.
Let $\operatorname{rev}$ reverse every word.
Since $\Ass$ is group-like, its antipode identity is
$\operatorname{rev}(\Ass^{-1})=\Ass(-e_0,-e_1)$.  Thus
\begin{equation}
\mathcal G_I=
\operatorname{rev}\bigl(\Ass(-e_0,-e_1)_Y\bigr)\,U(e_1).
\label{eq:defect-f-y}
\end{equation}
The reversal and signs record the column-action convention and
the direction of the comparison square.  The additional factor
$U(e_1)$ records the difference between topological and
logarithmic variation.

The operation $f\mapsto f_Y$ is the transport-algebra projection
appearing in the description of semi-holonomy in
\cite[Section~6]{MarkarianConvolution}.  The companion paper
\cite[Subsection~2.3]{MarkarianDefect} develops this interpretation
of \eqref{eq:defect-f-y} and extends it to general gluing data at
$1$.  Its convention uses $dz/(z-1)$ in place of $dz/(1-z)$,
so the corresponding letter $e_1$ has the opposite sign.

\subsection{Fundamental groupoid via shrinking and Verdier duality}
\label{subsec:duality-path-torsor}

Let $V$ be a unipotent local system on
$X=\PP\setminus\{0,1,\infty\}$.  We order the boundary symbols as
$(0,1,\infty)$.  All identifications in this subsection are on
the underlying Betti vector spaces.  Here $j_{x,*}$ and $j_{x,!}$ denote the two derived
direct images across the puncture $x$.  We write
\[
j_{\varepsilon_0\varepsilon_1\varepsilon_\infty}V
 :=j_{\infty,\varepsilon_\infty}
   j_{0,\varepsilon_0}j_{1,\varepsilon_1}V,
\qquad \varepsilon_x\in\{!,*\}.
\]

Write $\overrightarrow{ab}$ for the tangential basepoint at $a$
pointing along the real interval towards $b$; the interval is the
component of $\PP(\mathbb R)\setminus\{0,1,\infty\}$ with these
endpoints.  The shrinking construction gives the following
vanishing-cycle identifications:
\begin{align*}
H^1(\PP,j_{**!}V)&\simeq
 \Phi_{\overrightarrow{01}}(j_{**!}V[1])
 \simeq\Phi_{\overrightarrow{10}}(j_{**!}V[1]),\\
H^1(\PP,j_{*!*}V)&\simeq
 \Phi_{\overrightarrow{\infty 0}}(j_{*!*}V[1])
 \simeq\Phi_{\overrightarrow{0\infty}}(j_{*!*}V[1]),\\
H^1(\PP,j_{!**}V)&\simeq
 \Phi_{\overrightarrow{1\infty}}(j_{!**}V[1])
 \simeq\Phi_{\overrightarrow{\infty 1}}(j_{!**}V[1]).
\end{align*}

The natural morphisms $j_!\to j_*$ at one boundary point give the
maps
\begin{align*}
H^1(\PP,j_{!*!}V)&\longrightarrow H^1(\PP,j_{**!}V)\longleftarrow
H^1(\PP,j_{*!!}V),\\
H^1(\PP,j_{!!*}V)&\longrightarrow H^1(\PP,j_{*!*}V)\longleftarrow
H^1(\PP,j_{*!!}V),\\
H^1(\PP,j_{!!*}V)&\longrightarrow H^1(\PP,j_{!**}V)\longleftarrow
H^1(\PP,j_{!*!}V).
\end{align*}

Verdier duality exchanges $!$ and $*$ at every boundary point and gives isomorphisms:
\begin{align*}
H^1(\PP,j_{**!}V)^\vee&\simeq H^1(\PP,j_{!!*}V^\vee),\\
H^1(\PP,j_{*!*}V)^\vee&\simeq H^1(\PP,j_{!*!}V^\vee),\\
H^1(\PP,j_{!**}V)^\vee&\simeq H^1(\PP,j_{*!!}V^\vee).
\end{align*}

For each tangential basepoint $\varepsilon$, nearby cycles give
a canonical identification
\[
\Psi_\varepsilon(V^\vee[1])
\simeq\Psi_\varepsilon(V[1])^\vee.
\]
Here nearby cycles of a local system mean those of its restriction
to a punctured neighborhood.  For $\mathcal F=j_{abc}V[1]$, the map
$\var:\Phi_\varepsilon(\mathcal F)\to\Psi_\varepsilon(\mathcal F)$
is an isomorphism at a $*$-point, whereas
$\can:\Psi_\varepsilon(\mathcal F)\to\Phi_\varepsilon(\mathcal F)$
is an isomorphism at a $!$-point.  In Hodge realizations the global
duality formulas above require a Tate twist $(1)$ on the right,
and at a $*$-point logarithmic variation gives
$\Phi\simeq\Psi(-1)$, as in Section~\ref{sec:iterated-kummer}.

By the Milnor triangles, the natural maps above recover the local
monodromy operators; see
\cite[Section~1.2, Remark~3]{MarkarianConvolution}.
Shrinking recovers parallel transport along each of the three real
intervals.  To describe the fundamental groupoid with all six
tangential basepoints, also fix at each puncture a homotopy class
of a half-circle in the punctured tangent line joining its two
real tangent directions.  The interval transports, these local
transports, and the local monodromies then describe its
parallel-transport representation.

These descriptions suggest a construction of the mixed Hodge
structure on the pro-unipotent fundamental groupoid using
shrinking and duality.  Constructing it and comparing it with
\cite{HainZucker} requires compatibility with tensor products and
the Hodge realizations; we do not develop that construction here.

\section{\texorpdfstring{$A_\infty$}{A-infinity} category}
\label{sec:ainfty-category}

\subsection{Beilinson--Deligne cohomology and weight-framed extensions}
\label{subsec:beilinson-deligne}

Let $Y$ be a smooth complex algebraic variety.  Absolute Hodge
cohomology is defined by
\begin{equation}
R\Gamma_{\mathcal H}(Y,\Q(n))=
R\operatorname{Hom}_{\mathrm{MHS}}
\bigl(\Q(0),R\Gamma(Y,\Q)(n)\bigr),
\qquad
H^r_{\mathcal H}=H^rR\Gamma_{\mathcal H}.
\label{eq:absolute-hodge-ext}
\end{equation}
Here $R\Gamma(Y,\Q)$ is taken in the derived category of mixed
Hodge structures, using Beilinson's mixed Hodge complex construction
\cite[0.1 and Section~3]{BeilinsonAbsolute}.
Thus the weight filtration is retained throughout the construction.

For $X=\PP\setminus\{0,1,\infty\}$ and $n>0$, a familiar
Deligne--Beilinson representative is
\begin{equation}
R\Gamma_{\mathcal H}(X,\Q(n))\simeq
\operatorname{Cone}\left(
\begin{aligned}
R\Gamma_B(X,\Q(n))\oplus F^nR\Gamma_{\mathrm{dR}}(X)\\
{}\longrightarrow R\Gamma_B(X,\C)
\end{aligned}\right)[-1].
\label{eq:beilinson-deligne-complex}
\end{equation}
The arrow is the difference of the two comparison maps.  The de
Rham term is computed by logarithmic forms on $\PP$.
This expression is valid here because
$H^0(X,\Q(n))=\Q(n)$ and
$H^1(X,\Q(n))=\Q(n-1)^{\oplus2}$ have nonpositive weights.
The weight truncation in the absolute Hodge complex is therefore
a quasi-isomorphism.  We do not use the weight-forgetting cone
\eqref{eq:beilinson-deligne-complex} for arbitrary $Y$ or for $n=0$.
Composition is given by the cup product of mixed Hodge complexes;
see \cite[Lemma~1.11 and Remark~1.12]{BeilinsonAbsolute}.

For a point and $b>a$,
\begin{equation}
\operatorname{Ext}^1_{\mathrm{MHS}}(\Q(a),\Q(b))
=\C/(2\pi i)^{b-a}\Q.
\label{eq:tate-extension-quotient}
\end{equation}
The quotient expresses the freedom to change the Betti splitting.
We retain that splitting as part of the data.

\begin{definition}
A \emph{weight framing} of a mixed Hodge--Tate structure $H$ is a
rational splitting
\begin{equation}
\beta_B:\operatorname{gr}^W H_B\xrightarrow{\sim}H_B
\label{eq:weight-framing}
\end{equation}
of its weight filtration.  If a graded piece has multiplicity, its
rational multiplicity space is retained; a matrix additionally
requires bases in these spaces.  Morphisms preserve the splittings.
A weight-framed extension of $\Q(a)$ by $\Q(b)$, $b>a$, is an
extension equipped with a rational section of its Betti quotient.
\end{definition}

This full splitting is stronger than a framing of the two extreme
weight-graded pieces.  Baer sum and rational scalar multiplication
give a vector space of framed extension classes.  There is an exact
sequence
\begin{equation}
\begin{aligned}
0\longrightarrow\operatorname{Hom}_{\Q}(\Q(a)_B,\Q(b)_B)
&\longrightarrow\operatorname{Ext}^1_{\mathrm{wf}}(\Q(a),\Q(b))\\
&\longrightarrow\operatorname{Ext}^1_{\mathrm{MHS}}(\Q(a),\Q(b))
\longrightarrow0.
\end{aligned}
\label{eq:weight-framed-exact-sequence}
\end{equation}
Equivalently, the framed extensions are the degree-one cohomology
of
\begin{equation}
\operatorname{Cone}\left(
R\operatorname{Hom}_{\mathrm{MHS}}(\Q(a),\Q(b))
\longrightarrow
R\operatorname{Hom}_{\Q}(\Q(a)_B,\Q(b)_B)
\right)[-1].
\label{eq:weight-framed-rhom}
\end{equation}
With standard Tate generators and the period convention fixed below,
this gives the chosen coordinate identification
\begin{equation}
\operatorname{Ext}^1_{\mathrm{wf}}(\Q(a),\Q(b))\simeq\C.
\label{eq:weight-framed-tate-extension}
\end{equation}
Formula \eqref{eq:weight-framed-rhom} computes relative extension
data; by itself it is not the morphism complex of a unital category.

We shall use period matrices in the graded de Rham space, consistently
with Section~2.  For a framed $H$ set
\begin{equation}
P_H=(\operatorname{gr}^W c_H)\,
\beta_{B,\C}^{-1}c_H^{-1}s_{H,\mathrm{dR}}^F.
\label{eq:framed-period-matrix}
\end{equation}
This is a unipotent automorphism of $\operatorname{gr}^W H_{\mathrm{dR}}$.
The factor $\operatorname{gr}^W c_H$ includes the standard Tate
periods $(2\pi i)^r$.  Thus \eqref{eq:framed-period-matrix}
fixes the scalar convention for \eqref{eq:weight-framed-tate-extension};
its off-diagonal entry is the actual framed period.

\subsection{Two \texorpdfstring{$A_\infty$}{A-infinity} functors}
\label{subsec:two-ainfty-functors}

Let $\mathcal C_X^{\mathrm{wf}}$ be the category of pairs
$(\mathbb V,\beta_B)$, where $\mathbb V\in\mathcal T(X)$ and
$\beta_B$ is a rational splitting of the weight filtration on
$\Phi_B(j_{**!}\mathbb V)\otimes\Q(1)$, with the fixed
nearby-cycle lift at $1$.  Morphisms preserve this splitting.
This category is abelian: exactness of the Betti realization and
strictness for weights induce splittings on kernels and cokernels.

Take the dg quotient of its bounded complexes by acyclic complexes
\cite{DrinfeldDG}, and let $\mathcal A_X^{\mathrm{wf}}$ be a
strictly unital minimal model of the full dg subcategory on the
constant Tate objects; see \cite[Sections~3 and~7]{KellerAinfty}.
Its reduced morphism spaces have strictly positive cohomological
degree: negative Ext groups vanish, and the only degree-zero maps
between Tate objects are scalar identities.  The weight filtration
represents every object by a finite extension-type twisted object,
hence by a finite Maurer--Cartan matrix $\delta$ satisfying
\begin{equation}
\sum_{r\geq1}m_r(\delta,\ldots,\delta)=0.
\label{eq:word-maurer-cartan}
\end{equation}
We use the standard strictly unital and minimal conventions, so the
operations have degree $2-r$.

The target Tate model $\mathcal T^{\mathrm{wf}}$ has objects
$T_a=\Q(a)$ and
\begin{equation}
\operatorname{Hom}^r(T_a,T_b)=
\begin{cases}
\Q\,\mathrm{id},&r=0,\ a=b,\\
\C,&r=1,\ b>a,\\
0,&\text{otherwise}.
\end{cases}
\label{eq:framed-target-category}
\end{equation}
The differential and the operations $m_r$ for $r\geq3$ are zero,
and products of two reduced degree-one arrows vanish.  In Betti weight
coordinates a framed Hodge--Tate structure is an arbitrary
unipotent complex period matrix; morphisms are graded rational
maps intertwining the matrices.  After choosing a $\Q$-basis of
$\C$, these data are finite-support representations of the quiver
with one arrow for each basis element from $a$ to $b$ whenever
$b>a$.  There are no relations, so this category is hereditary.
Thus the displayed dg category is a minimal Tate model for framed
Hodge--Tate structures.  We identify its finite extension-type
twisted objects with these structures by the period convention
\begin{equation}
\delta_H=P_H-1.
\label{eq:target-period-coordinate}
\end{equation}
This fixes the target coordinate used below.
We choose the target quasi-equivalence compatibly with
\eqref{eq:target-period-coordinate}, and take all subsequent
homotopy transfers with respect to this fixed choice.  Fixing
the coordinate only on two-weight extensions would not suffice
to determine it on longer extensions.

The two geometric realizations, with the common Tate twist,
are
\begin{align}
\mathbf H(\mathbb V)&=H^1(j_{**!}\mathbb V)\otimes\Q(1),
\label{eq:cohomology-functor}\\
\boldsymbol\Phi(\mathbb V)&=\Phi(j_{**!}\mathbb V)\otimes\Q(1).
\label{eq:vanishing-cycle-functor}
\end{align}
By the purity and shrinking results of Section~1, both realizations
preserve the basic Tate objects.  Thus, for every $a\in\mathbb Z$,
there are canonical identifications
\[
\mathbf H(\Q_X(a))\simeq T_a,
\qquad
\boldsymbol\Phi(\Q_X(a))\simeq T_a.
\]
For an object of $\mathcal C_X^{\mathrm{wf}}$, use its given
weight framing on the vanishing-cycle Betti realization and
transport it to cohomology by $\varphi_I$.  This makes the two output framings compatible
with shrinking.  On Tate objects the identifications above
respect these framings and the Tate-period convention.  Thus the
two realizations agree on Tate objects and identity morphisms;
their difference appears on extension data.  Both realizations are
exact functors to weight-framed mixed Hodge--Tate structures.
Applying them to bounded complexes and passing to the dg quotients
gives dg lifts.  Homotopy transfer therefore gives strictly unital
$A_\infty$ functors
\begin{equation}
\mathbf H_\infty,\boldsymbol\Phi_\infty:
\mathcal A_X^{\mathrm{wf}}\longrightarrow\mathcal T^{\mathrm{wf}},
\label{eq:two-ainfty-functors}
\end{equation}
where $\mathcal A_X^{\mathrm{wf}}$ is the chosen minimal Tate model.
Their higher components depend on the chosen transfer, but the
residue-coordinate expression below is the formal one relevant to the
Hodge defect calculation.

We now record that formal relative construction.  We use reduced
morphism spaces and send identities to identities.  For an adjacent
extension class
$x\in\operatorname{Hom}^1_{\mathcal A_X^{\mathrm{wf}}}(T_a,T_{a+1})$,
its de Rham class is
\[
\operatorname{res}_0(x)\,\omega_0+
\operatorname{res}_1(x)\,\omega_1,
\qquad \operatorname{res}_i(x)\in\Q.
\]
Here $\operatorname{res}_1$ is the coefficient of $\omega_1$
(the negative of the usual residue at $1$), in the Tate-period
convention of Section~1.  Constant extensions have zero residues.
Choose weight-framed lifts $\kappa_0,\kappa_1$ of the Kummer
classes represented by $\omega_0,\omega_1$, so that
\[
\operatorname{res}_i(\kappa_j)=\delta_{ij}.
\]
The residue formula is independent of the choice of these lifts.
Recall the numbers $g_w$ of
\eqref{eq:word-coefficient-formula}.  Send each Tate object to
$T_a$.  For a composable string of degree-one classes between
successive Tate degrees, define
\begin{equation}
(\mathcal G_\infty)_r(x_r,\ldots,x_1)
=\sum_{i_1,\ldots,i_r\in\{0,1\}}
g_{i_r\cdots i_1}\,
\operatorname{res}_{i_r}(x_r)\cdots
\operatorname{res}_{i_1}(x_1).
\label{eq:explicit-relative-components}
\end{equation}
The value lies in
$\operatorname{Hom}^1(T_a,T_{a+r})=\C$ of the target.
On every other string of reduced homogeneous morphisms put the
value equal to zero.  Finally, send identities to identities and
impose strict unitality.  The arguments in
\eqref{eq:explicit-relative-components} are written in composition
order, so that the rightmost arrow acts first.

The coefficients $g_w$ of the Hodge defect computed in Section~2
define the formal relative construction.  For a finite Maurer--Cartan
matrix $\delta$ representing $\mathbb V$, put
\begin{equation}
h=\sum_{r\geq1}(\mathbf H_\infty)_r(\delta,\ldots,\delta),
\qquad
p=\sum_{r\geq1}(\boldsymbol\Phi_\infty)_r(\delta,\ldots,\delta).
\label{eq:transferred-period-entries}
\end{equation}
Choose its twisted representative with the given identification of
the weight-graded pieces.  The block $\delta_{a+1,a}$ represents
the two-weight subquotient in Tate degrees $a,a+1$.  Consequently,
in the common graded de Rham frame,
\[
N_i|_{E_a}=\operatorname{res}_i(\delta_{a+1,a}),\qquad i=0,1,
\]
entrywise on multiplicity spaces.  Since $N_i$ raises Tate degree
by one, these blocks determine the entire word substitution.
With the target coordinate \eqref{eq:target-period-coordinate},
one has $1+h=P_{\mathbf H(\mathbb V)}$ and
$1+p=P_{\boldsymbol\Phi(\mathbb V)}$ in the common graded de Rham
frame.  The compatible Betti framings and
\eqref{eq:framed-period-matrix} therefore identify the matrix of
the Hodge defect with $(1+h)^{-1}(1+p)$.

\begin{rem}[Formal $A_\infty$ interpretation]
Formula \eqref{eq:explicit-relative-components}, together with strict
unitality, is the formal relative $A_\infty$-construction associated
with the Hodge defect.  In the chosen model it defines a strictly
unital functor
\[
\mathcal G_\infty:\mathcal A_X^{\mathrm{wf}}
\longrightarrow\mathcal T^{\mathrm{wf}}.
\]
The $A_\infty$ identities are formal: every reduced source operation
on reduced inputs has degree at least two, while reduced target
products vanish.  Evaluating on $\delta$ gives
\begin{equation}
\begin{aligned}
\mathcal G_I(\mathbb V)
&=1+\sum_{r\geq1}(\mathcal G_\infty)_r(\delta,\ldots,\delta)\\
&=(1+h)^{-1}(1+p).
\end{aligned}
\label{eq:higher-term-defect-entry}
\end{equation}
Indeed, substitution of the adjacent residue blocks gives the word expansion
of the matrix of the Hodge defect calculated in Section~2.  On Kummer classes this
reads
\[
(\mathcal G_\infty)_r(\kappa_{i_r},\ldots,\kappa_{i_1})
=g_{i_r\cdots i_1}.
\]
Thus the coefficients calculated in Section~2 are the components
of this formal relative comparison.  They contain the explicit
local factor $U(e_1)$; after its removal, the coefficients are
multiple zeta values as in Theorem~\ref{thm:defect-associator}.
The minimal models and homotopy transfers depend on choices;
\eqref{eq:explicit-relative-components} fixes the formal relative
comparison considered here.
\end{rem}

\bibliographystyle{alpha}
\bibliography{hodge}

\end{document}